\newcommand{\noun}[1]{\textsc{#1}}
\numberwithin{equation}{section}
\numberwithin{figure}{section}
\theoremstyle{remark}
\newtheorem*{acknowledgement*}{\protect\acknowledgementname}
\theoremstyle{plain}
\newtheorem{thm}{\protect\theoremname}[section]
\theoremstyle{definition}
\newtheorem{defn}[thm]{\protect\definitionname}
\theoremstyle{plain}
\newtheorem{prop}[thm]{\protect\propositionname}
\theoremstyle{remark}
\newtheorem{rem}[thm]{\protect\remarkname}
\date{}
\DeclareMathOperator{\supp}{supp}
\DeclareMathOperator{\Spec}{Spec}
\DeclareMathOperator{\diam}{diam}
\DeclareMathOperator{\tr}{tr}
\newcommand{\one}{\mathbbm{1}}
\newcommand{\Mod}[1]{\,\left(\textup{mod}\;#1\right)}
\theoremstyle{plain}
\theoremstyle{plain}
\theoremstyle{definition}
\newtheorem{rem}[thm]{Remark}
\newtheorem*{rems*}{Remarks}
\newtheorem*{disc*}{Discussion}
\providecommand{\acknowledgementname}{Acknowledgement}
\providecommand{\definitionname}{Definition}
\providecommand{\propositionname}{Proposition}
\providecommand{\remarkname}{Remark}
\providecommand{\theoremname}{Theorem}
\begin{document}
\title{Ramanujan Graphs and Digraphs}
\author{Ori Parzanchevski}
\thanks{Supported by ISF grant 1031/17.}
\begin{abstract}
Ramanujan graphs have fascinating properties and history. In this
paper we explore a parallel notion of Ramanujan \emph{digraphs}, collecting
relevant results from old and recent papers, and proving some new
ones. \emph{Almost-normal }Ramanujan digraphs are shown to be of special
interest, as they are extreme in the sense of an Alon-Boppana theorem,
and they have remarkable combinatorial features, such as small diameter,
Chernoff bound for sampling, optimal covering time and sharp cutoff.
Other topics explored are the connection to Cayley graphs and digraphs,
the spectral radius of universal covers, Alon's conjecture for random
digraphs, and explicit constructions of almost-normal Ramanujan digraphs.
\end{abstract}

\maketitle

\section{Introduction}

A connected $k$-regular graph is called a \emph{Ramanujan graph}
if every eigenvalue $\lambda$ of its adjacency matrix (see definitions
below) satisfies either 
\begin{align*}
~ &  &  & \left|\lambda\right|=k,\quad\text{or}\quad\left|\lambda\right|\leq2\sqrt{k-1}. &  &  & \left(Ramanujan~graph\right)
\end{align*}
While the generalized Ramanujan conjecture appears in the first constructions
of such graphs \cite{LPS88,margulis1988explicit}, the reason that
lead Lubotzky, Phillips and Sarnak to coin the term Ramanujan graphs
is that by their very definition, they present the phenomenon of extremal
spectral behavior, which Ramanujan observed in a rather different
setting.

In the case of graphs, this can be stated in two ways: Ramanujan graphs
spectrally mimic their universal cover, the infinite $k$-regular
tree $\mathcal{T}_{k}$, whose spectrum is the interval 
\[
\Spec\left(\mathcal{T}_{k}\right)=\left[-2\sqrt{k-1},2\sqrt{k-1}\right]
\]
\cite{Kesten1959}; And, they are asymptotically optimal: the Alon-Boppana
theorem (cf.\ \cite{LPS88,Nil91,HLW06}) states that for any $\varepsilon>0$,
there is no infinite family of $k$-regular graphs for which all nontrivial
adjacency eigenvalues satisfy $\left|\lambda\right|\leq2\sqrt{k-1}-\varepsilon$
(the trivial eigenvalues are by definition $\pm k$). These two observations
are closely related - in fact, any infinite family of quotients of
a common covering graph $\widetilde{\mathcal{G}}$ cannot ``do better''
than $\widetilde{\mathcal{G}}$ (see \cite{greenberg1995spectrum,grigorchuk1999asymptotic}
for precise statements).

A major interest in the adjacency spectrum of a graph comes from the
notion of \emph{expanders} - graphs of bounded degree whose nontrivial
adjacency spectrum is of small magnitude. Such graphs have strong
connectedness properties which are extremely useful: see \cite{Lub10,HLW06,Lub12}
for extensive surveys on properties of expanders in mathematics and
computer science.

Ramanujan graphs, which stand out as the optimal expanders (from the
spectral point of view), have a rich theory and history. The purpose
of this paper is to suggest that a parallel theory should be developed
for directed graphs (\emph{digraphs}, for short), where by a \emph{Ramanujan
digraph} we mean a $k$-regular digraph whose adjacency eigenvalues
satisfy either 
\begin{align}
~ &  &  & \left|\lambda\right|=k,\quad\text{or}\quad\left|\lambda\right|\leq\sqrt{k},\phantom{-1} &  &  & \left(Ramanujan~digraph\right)\label{eq:ram-digraph}
\end{align}

where the reasons for this definition will be made clear along the
paper. 

The idea of ``Ramanujan digraphs'' arose during the work on the
papers \cite{Parzanchevski2018SuperGoldenGates,Lubetzky2017RandomWalks};
While we believe that the term itself is new, several classic results
can be interpreted as saying something about these graphs (see for
example §\ref{subsec:Directed-line-graphs} and §\ref{subsec:Universal-Cayley-graphs}).
We survey here both classic results and ones from the mentioned papers,
and prove several new ones. We remark that for the most part of the
paper we focus on finite graphs and digraphs, with infinite ones appearing
mainly as universal covers. Without doubt, they merit further study
in their own right (see also §\ref{sec:Questions}).

The paper unfolds as follows: After giving the definitions in §\ref{sec:Definitions}
and various examples in §\ref{sec:Examples}, we prove that there
are very few normal Ramanujan digraphs in §\ref{subsec:Normal-Ramanujan-digraphs}.
We then turn to almost-normal digraphs in §\ref{sec:almost-normal},
proving an Alon-Boppana type theorem, and surveying their spectral
and combinatorial features, such as optimal covering, sharp cutoff,
small diameter and Chernoff sampling bound. We then explore Ramanujan
digraphs from the perspective of universal covers §\ref{subsec:Universal-Object},
and infinite Cayley graphs §\ref{subsec:Universal-Cayley-graphs}.
In §\ref{subsec:Explicit-constructions} we discuss an explicit construction
of Ramanujan digraphs as Cayley graphs of finite groups, which is
similar to the LPS construction \cite{LPS88}, but applies to any
$PGL_{d}$ and not only to $PGL_{2}$. In §\ref{subsec:Riemann-Hypothesis}
we touch upon zeta functions and the Riemann Hypothesis, in §\ref{subsec:Alon-conjecture}
we discuss Alon's second eigenvalue conjecture for digraphs, and finally
in §\ref{sec:Questions} we present some questions.
\begin{acknowledgement*}
We would like to express our gratitude to Noga Alon, Amitay Kamber,
Alex Lubotzky, Doron Puder and Alain Valette for various helpful remarks
and suggestions.
\end{acknowledgement*}

\section{\label{sec:Definitions}Definitions}

Throughout the paper we denote by $\mathcal{G}=\left(V_{\mathcal{G}},E_{\mathcal{G}}\right)$
a connected $k$-regular graph on $n$ vertices, where by a \emph{graph}
we always mean an undirected one. Its adjacency matrix $A=A_{\mathcal{G}}$,
indexed by $V$, is defined by $A_{v,w}=1$ if $v\sim w$ ($v$ and
$w$ are neighbors in $\mathcal{G}$), and $0$ otherwise\footnote{On occasions we allow loops and multiple edges, in which case $A_{v,w}$
is the number of edges between $v$ and $w$.}. Since $\sim$ is symmetric, so is $A$, hence it is self adjoint
with real spectrum. The constant function $\one$ is an eigenvector
of $A$ with eigenvalue $k$, and when $\mathcal{G}$ is bipartite,
say $V=L\amalg R$, the function $\one_{L}-\one_{R}$ is an eigenvector
with eigenvalue $-k$. We call these eigenvalues and eigenvectors
\emph{trivial}, and denote by $L_{0}^{2}=L_{0}^{2}\left(V\right)$
their orthogonal complement in $L^{2}\left(V\right)$, namely
\[
L_{0}^{2}\left(V\right)=\begin{cases}
\one^{\bot} & \mathcal{G}\text{ is not bipartite}\\
\left\langle \one_{L},\one_{R}\right\rangle ^{\bot} & \mathcal{G}\text{ is bipartite}.
\end{cases}
\]
Observe that $A$ restricts to a self-adjoint operator on $L_{0}^{2}\left(V\right)$,
and recall that for self-adjoint (and even normal) operators, the
spectral radius
\[
\rho\left(M\right)=\max\left\{ \left|\lambda\right|\,\big|\,\lambda\in\Spec\left(M\right)\right\} 
\]
coincides with the operator norm 
\[
\left\Vert M\right\Vert =\max_{v\neq0}\frac{\left\Vert Mv\right\Vert }{\left\Vert v\right\Vert }.
\]

\begin{defn}[\cite{LPS88}]
A $k$-regular graph $\mathcal{G}$ is a \emph{Ramanujan graph} if
\[
\rho\left(\mathcal{G}\right)\overset{{\scriptscriptstyle def}}{=}\rho\left(A_{\mathcal{G}}\big|_{L_{0}^{2}}\right)=\left\Vert A_{\mathcal{G}}\big|_{L_{0}^{2}}\right\Vert \leq2\sqrt{k-1}.
\]
\end{defn}

Moving on to digraphs, we denote by $\mathcal{D}$ a finite connected
$k$-regular directed graph, by which we mean that each vertex has
$k$ incoming and $k$ outgoing edges. Now, $A_{v,w}=1$ whenever
$v\rightarrow w$ (namely, there is an edge from $v$ to $w$) and
since $A$ is no longer symmetric, its spectrum is not necessarily
real. However, by regularity we still have
\[
\rho\left(A\right)=\left\Vert A\right\Vert _{1}=\left\Vert A\right\Vert _{2}=\left\Vert A\right\Vert _{\infty}=k,
\]
as any square matrix satisfies
\begin{equation}
\left\Vert A\right\Vert _{2}^{2}=\rho\left(A^{*}A\right)\leq\left\Vert A^{*}A\right\Vert _{\infty}\leq\left\Vert A^{*}\right\Vert _{\infty}\left\Vert A\right\Vert _{\infty}=\left\Vert A\right\Vert _{1}\left\Vert A\right\Vert _{\infty},\label{eq:Norm2-1-infty}
\end{equation}
and $\one$ is still a $k$-eigenfunction. If $\mathcal{D}$ is $m$-periodic,
namely $V_{\mathcal{D}}=\coprod_{j=0}^{m-1}V_{j}$ with every edge
starting in $V_{j}$ terminating in $V_{\left(j+1\!\mod m\right)}$,
then $e^{2\pi ti/m}k$ are also eigenvalues (with $t=1,\ldots,m\!-\!1$),
with corresponding eigenfunctions $\sum_{j=0}^{m-1}e^{2\pi jti/m}\one_{V_{j}}$.
By Perron-Frobenius theory, all eigenvalues of absolute value $k$
arise in this manner. We call these eigenvalues (including $k$) trivial,
and denote by $L_{0}^{2}$ the orthogonal complement of their eigenfunctions
in $L^{2}\left(V_{\mathcal{D}}\right)$. Even though $A$ is not self-adjoint
or normal, the regularity assumptions ensures that it still restricts
to $L_{0}^{2}$, and we make the following definition:
\begin{defn}
\label{def:digraph}A $k$-regular digraph $\mathcal{D}$ is a \emph{Ramanujan
digraph} if 
\[
\rho\left(\mathcal{D}\right)\overset{{\scriptscriptstyle def}}{=}\rho\left(A_{\mathcal{D}}\big|_{L_{0}^{2}}\right)\leq\sqrt{k}.
\]
\end{defn}

The bound $\left\Vert A\big|_{L_{0}^{2}}\right\Vert \leq\sqrt{k}$
does not have to hold anymore; Indeed, we will see that there are
Ramanujan digraphs for which $\left\Vert A\big|_{L_{0}^{2}}\right\Vert =k$,
which is as bad as one can have for a $k$-regular adjacency operator
(in the undirected settings, this would mean that the graph is disconnected).
For spectral analysis, the operator norm is much more important than
the spectral radius, and this is what makes digraphs harder to study
than graphs.

We say that a digraph $\mathcal{D}$ is self-adjoint, or normal, if
its adjacency matrix is. In these cases we do have $\left\Vert A_{\mathcal{D}}\big|_{L_{0}^{2}}\right\Vert =\rho\left(\mathcal{D}\right)$,
and much of the theory of expanders remains as it is for graphs (see
for example \cite{Vu2008Sumproductestimates}). However, we will see
in Proposition \ref{prop:fin-many-normal} that there are very few
normal Ramanujan digraphs. A main novelty of \cite{lubetzky2016cutoff},
which was developed further in \cite{Lubetzky2017RandomWalks}, is
the idea of \emph{almost-normal }digraphs:
\begin{defn}
A matrix is \emph{$r$-normal} if it is unitarily equivalent to a
block-diagonal matrix with blocks of size at most $r\times r$. A
digraph is called $r$-normal if its adjacency matrix is $r$-normal,
and a family of matrices (or digraphs) is said to be almost-normal
if its members are $r$-normal for some fixed $r<\infty$.

We shall see in §\ref{sec:almost-normal} that for many applications,
almost-normal digraphs are almost as good as normal ones.
\end{defn}

\section{\label{sec:Examples}Examples}

\subsection{\label{subsec:Complete-digraphs}Complete digraphs}

For $m,k\in\mathbb{N}$, we define the complete $k$-regular $m$-periodic
digraph $\mathcal{K}_{k,m}$ by 
\begin{align*}
V_{\mathcal{K}_{k,m}} & =\left\{ \left(x,y\right)\,\middle|\,x\in\nicefrac{\mathbb{Z}}{m\mathbb{Z}},\ y\in\left[k\right]\right\} \\
E_{\mathcal{K}_{k,m}} & =\left\{ \left(x,y\right)\rightarrow\left(x+1,z\right)\,\middle|\,x\in\nicefrac{\mathbb{Z}}{m\mathbb{Z}},\ y,z\in\left[k\right]\right\} .
\end{align*}
This is a normal Ramanujan digraph on $n=km$ vertices, with $m$
trivial eigenvalues coming from periodicity, and $\left(k-1\right)m$
times the eigenvalue zero. This shows that one should focus on the
case of bounded degree and periodicity, for otherwise infinite families
of trivial examples arise.

\subsection{\label{subsec:Projective-planes}Projective planes and hyperplanes}

The \emph{Projective plane over $\mathbb{F}_{p}$} is the undirected
bipartite graph whose vertices represent the lines and planes in $\mathbb{F}_{p}^{3}$,
and whose edges correspond to the relation of inclusion. It is $k$-regular
for $k=p+1$, and has $n=2\left(p^{2}+p+1\right)$ vertices. Its nontrivial
spectrum is $\pm\sqrt{k-1}$ (each repeating $p^{2}+p$ times), so
it is Ramanujan. In fact, it is twice better than Ramanujan, which
only requires $\left|\lambda\right|\leq2\sqrt{k-1}$. We can therefore
consider it as a digraph, with each edge appearing with both directions,
and obtain a $k$-regular self-adjoint Ramanujan digraph, since the
adjacency matrix remains the same. 

More generally, the bipartite graph of lines against $d$-spaces in
$\mathbb{F}_{p}^{d+1}$ (with respect to inclusion) has $n=2\cdot\frac{p^{d+1}-1}{p-1}$
vertices and is $k$-regular with $k=\frac{p^{d}-1}{p-1}$. Its nontrivial
eigenvalues are $\pm\sqrt{p^{d-1}}=\pm\sqrt{k-\frac{p^{d-1}-1}{p-1}}$,
so we obtain a self-adjoint Ramanujan digraph for every $d$.

\subsection{\label{subsec:Paley-digraphs}Paley digraphs}

For a prime $p$ with $p\equiv3\Mod{4}$, the Paley digraph $\mathcal{PD}\left(p\right)$
\cite{graham1971constructive} has $V=\mathbb{F}_{p}$ and 
\[
E=\left\{ a\rightarrow b\,\middle|\,\left(\tfrac{b-a}{p}\right)=1\right\} ,
\]
where $\left(\frac{\cdot}{\cdot}\right)$ is the Legendre symbol.
It is a $k=\frac{p-1}{2}$-regular normal digraph, with nontrivial
eigenvalues $\frac{-1\pm i\sqrt{p}}{2}$ (this is a nice exercise
in Legendre symbols). These are of absolute value $\sqrt{\frac{k+1}{2}}$,
so $\mathcal{PD}\left(p\right)$ is a normal Ramanujan digraph.

\bigskip{}

It turns out that examples as in §\ref{subsec:Complete-digraphs}-§\ref{subsec:Paley-digraphs}
are limited. In §\ref{subsec:Normal-Ramanujan-digraphs} we will prove:
\begin{prop}
\label{prop:fin-many-normal}For any fixed $k\geq2$ and $m\geq1$
there are only finitely many $k$-regular $m$-periodic normal (and
in particular, self-adjoint) Ramanujan digraphs.
\end{prop}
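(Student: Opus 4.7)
The plan is to bound $n$ by contrasting the Ramanujan spectral gap against an Alon--Boppana-type lower bound. The leverage comes from normality: for normal $A$ the operator norm and spectral radius coincide, so $\rho(A|_{L_{0}^{2}})\leq\sqrt{k}$ is a strong constraint that should eventually clash with the generic spectral growth forced on any infinite family of regular graphs.

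First I would reduce to the aperiodic case by passing to $A^{m}$. With the partition $V_{\mathcal{D}}=\coprod_{j=0}^{m-1}V_{j}$, every length-$m$ walk returns to the same $V_{j}$, so $A^{m}$ is block-diagonal with blocks $M_{j}$ of size $n/m$, each a normal non-negative integer matrix with constant row and column sums $k^{m}$. Since $A$ is normal, the spectrum of $M_{j}$ consists of the simple top eigenvalue $k^{m}$ together with values $\lambda^{m}$ for certain nontrivial eigenvalues $\lambda$ of $A$; the hypothesis $|\lambda|\leq\sqrt{k}$ forces $|\lambda^{m}|\leq k^{m/2}$, so $M_{j}$ is itself an aperiodic normal ``$k^{m}$-regular Ramanujan digraph'' (in general carrying multi-edges and loops). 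I would then symmetrize: $N_{j}=M_{j}+M_{j}^{*}$ is a non-negative integer symmetric matrix with row sums $2k^{m}$, hence the adjacency matrix of a $2k^{m}$-regular undirected multigraph on $n/m$ vertices. Normality of $M_{j}$ gives simultaneous diagonalization with $M_{j}^{*}$, so $\Spec(N_{j})=\{2\operatorname{Re}(\mu)\mid\mu\in\Spec(M_{j})\}$: the top eigenvalue $2k^{m}$ is simple, and every other eigenvalue lies in $[-2k^{m/2},2k^{m/2}]$.

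Now invoking the Alon--Boppana theorem for regular multigraphs, any infinite family of connected $2k^{m}$-regular multigraphs $N$ satisfies $\limsup\lambda_{2}(N)\geq 2\sqrt{2k^{m}-1}$. Since $2k^{m/2}<2\sqrt{2k^{m}-1}$ whenever $k^{m}\geq 2$, only finitely many sizes $n/m$ are admissible, bounding $n$ in terms of $k$ and $m$. The remaining case $k=1$ is trivial: a connected $1$-regular digraph is a single directed cycle, determined by its period $m$. I expect the main obstacle to be justifying the Alon--Boppana step for $N_{j}$ itself, which generically carries self-loops (since $(M_{j})_{v,v}$ counts length-$m$ closed walks through $v$); for multigraphs heavily dominated by loops the universal cover's spectral radius can fall below $2\sqrt{d-1}$, and one has to rule this out from the structure of $M_{j}M_{j}^{*}$. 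A cleaner route that sidesteps this entirely is to apply the Alon--Boppana idea directly to the positive semi-definite integer matrix $M_{j}M_{j}^{*}$, whose spectrum $\{|\mu|^{2}\}$ has top eigenvalue $k^{2m}$ (simple) and all remaining eigenvalues bounded by $k^{m}$; this is an even larger spectral gap, and a weighted-graph version of Alon--Boppana then closes the argument.
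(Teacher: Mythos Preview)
Your approach is essentially the same as the paper's: symmetrize, use normality to bound the second eigenvalue of the symmetrization by $2\sqrt{k}$ (resp.\ $2k^{m/2}$), and contrast this with the Alon--Boppana bound $2\sqrt{2k-1}$ (resp.\ $2\sqrt{2k^{m}-1}$). The only cosmetic difference is the order of operations: the paper first treats the aperiodic case (symmetrize $A_{\mathcal{D}}$, invoke a quantitative Alon--Boppana via Nilli's theorem together with Moore's diameter bound, obtaining an explicit $n\leq 2(2k-1)^{10.4}$), and then reduces the $m$-periodic case to the aperiodic one by passing to $A_{\mathcal{D}}^{m}$; you pass to $A^{m}$ first and then symmetrize. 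The paper thus gets concrete numerical bounds, while you invoke Alon--Boppana in its asymptotic form.

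Your worry about self-loops is not a genuine obstacle, and in fact the paper glosses over the identical issue when it applies its aperiodic argument to the multigraph $\mathcal{D}'$ with $A_{\mathcal{D}'}=A_{\mathcal{D}}^{m}$. The point is that $N_{j}=M_{j}+M_{j}^{*}$ has \emph{even} diagonal entries $2(M_{j})_{vv}$, so each contributes an integer number of full loops (each counted twice in the degree), and the universal cover of such a $2k^{m}$-regular multigraph is exactly the $2k^{m}$-regular tree, with spectral radius $2\sqrt{2k^{m}-1}$. Nilli's test-function argument uses only the row-sum condition and the existence of two far-apart vertices, so it applies verbatim to $N_{j}$; your claim that the universal cover's spectral radius ``can fall below $2\sqrt{d-1}$'' would require half-edges, which do not arise here. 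So there is no need for the detour through $M_{j}M_{j}^{*}$.
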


Thus, if we wish to fix the regularity $k$ and periodicity $m$,
and yet take $\left|V\right|=n$ to infinity we must move on to non-normal
graphs.

\subsection{Extremal directed expanders}

The De Bruijn graph $\mathcal{DB}\left(k,s\right)$ is a $k$-regular
aperiodic Ramanujan digraph with 
\begin{align*}
V_{\mathcal{DB}\left(k,s\right)} & =\left[k\right]^{s}\qquad\text{(\ensuremath{\left[k\right]=\{1,\ldots,k\}}, so \ensuremath{n=k^{s}})}\\
E_{\mathcal{DB}\left(k,s\right)} & =\left\{ \left(a_{1},\ldots,a_{s}\right)\rightarrow\left(a_{2},\ldots,a_{s},t\right)\,\middle|\,a_{i},t\in\left[k\right]\right\} .
\end{align*}
Just as complete digraphs, the nontrivial spectrum of $\mathcal{DB}\left(k,s\right)$
consists entirely of zeros. However, its adjacency matrix is not diagonalizable,
and it has Jordan blocks of size $s$, so in particular, these do
not form an almost-normal family even for a fixed $k$. The Kautz
digraph is another example with similar properties.\footnote{For the spectrum of the symmetrization of De Bruin and Kautz digraphs,
see \cite{delorme1998spectrum}.} 

In \cite{Li1992Charactersumsand} Feng and Li show that $k$-regular
$r$-periodic \emph{diagonalizable} digraphs must have $\rho\left(\mathcal{D}\right)\geq1$
once $n>kr$. Furthermore, for any $n$ which is co-prime to $k$
they give an explicit construction of a $k$-regular $r$-periodic
digraph on $nr$ vertices with $\rho\left(\mathcal{D}\right)=1$.
\begin{rem}
De Bruijn graphs show that a direct analogue of the Alon-Boppana theorem
(with respect to any positive $\varepsilon$) does not hold for digraphs
in general. In §\ref{sec:almost-normal} we will see that in the settings
of almost-normal digraphs, an Alon-Boppana theorem does hold, with
the bound $\sqrt{k}$.
\end{rem}

\subsection{\label{subsec:Directed-line-graphs}Directed line graphs}

In this section we assume that $\mathcal{G}$ is a \textbf{$\left(\boldsymbol{k+1}\right)$-regular}
graph, and we define its $\boldsymbol{k}$\textbf{-regular} \emph{line-digraph}
$\mathcal{D}_{L}\left(\mathcal{G}\right)$ as follows: 
\begin{align*}
V_{\mathcal{D}_{L}\left(\mathcal{G}\right)} & =\left\{ \left(v,w\right)\,\middle|\,v,w\in V_{\mathcal{G}},\ v\sim w\right\} \\
E_{\mathcal{D}_{L}\left(\mathcal{G}\right)} & =\left\{ \left(v,w\right)\rightarrow\left(w,u\right)\,\middle|\,u\neq v\right\} .
\end{align*}
Namely, the vertices correspond to edges in $\mathcal{G}$ with a
chosen direction, and a $\mathcal{G}$-edge is connected to another
one in $\mathcal{D}_{L}\left(\mathcal{G}\right)$ if they form a non-backtracking
path of length 2 in $\mathcal{G}$. The importance of this construction
is that non-backtracking walks on $\mathcal{G}$ are encoded precisely
by regular (memory-less) walks on $\mathcal{D}_{L}\left(\mathcal{G}\right)$
(see Figure \ref{fig:The-line-digraph}).

\begin{figure}[h]
\begin{centering}
\includegraphics[width=0.6\columnwidth]{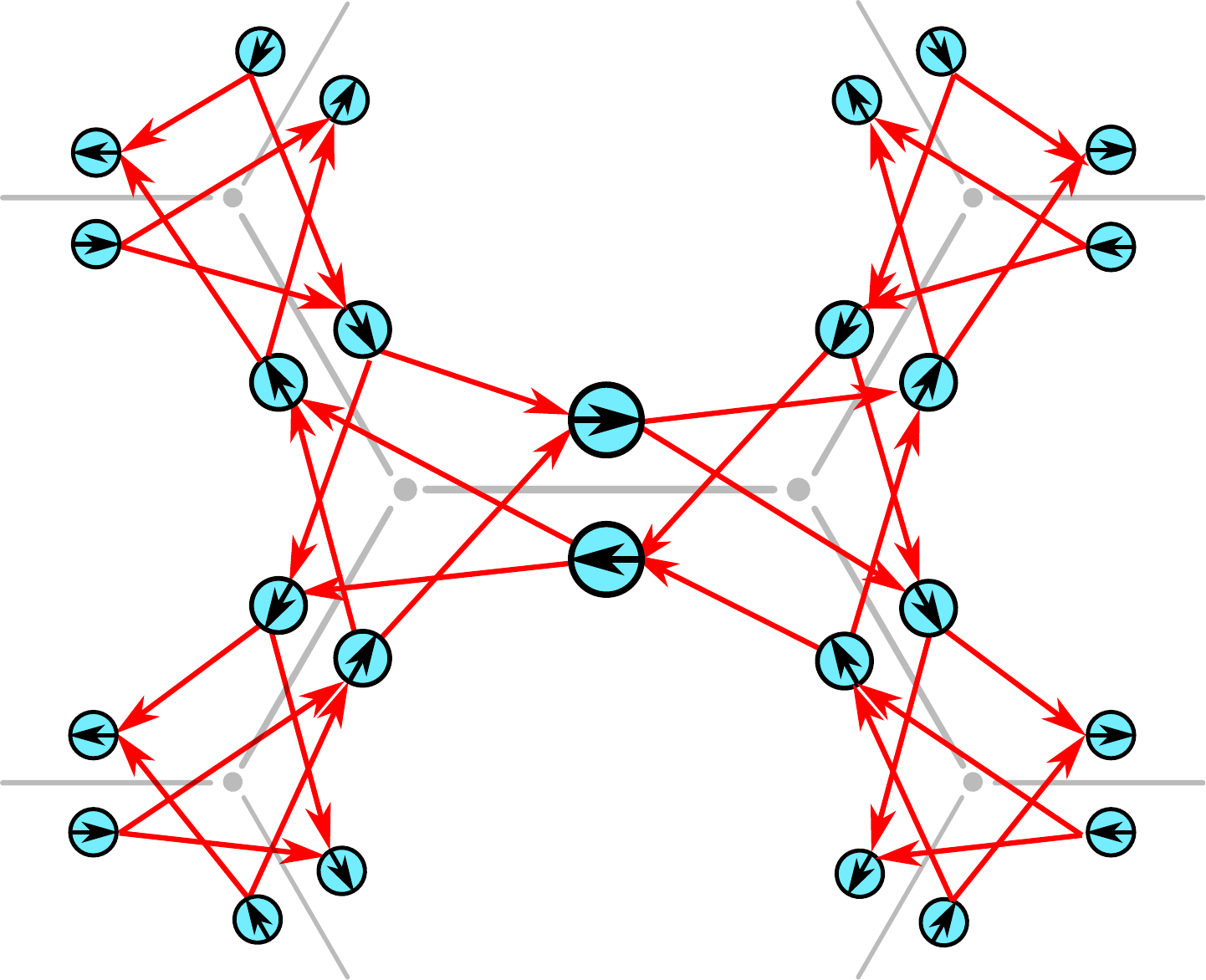}
\par\end{centering}
\caption{\label{fig:The-line-digraph}The local view of the line-digraph of
a 3-regular graph (the original graph is shown in the background).}
\end{figure}

By Hashimoto's interpretation of the Ihara-Bass formula (cf.\ \cite{ihara1966discrete,sunada1986functions,hashimoto1989zeta,bass1992ihara,stark1996zeta,foata1999combinatorial,kotani2000zeta,lubetzky2016cutoff}),
the spectra of $\mathcal{G}$ and $\mathcal{D}_{L}\left(\mathcal{G}\right)$
are related: 
\begin{equation}
\Spec\left(\mathcal{D}_{L}\left(\mathcal{G}\right)\right)=\left\{ \frac{\lambda\pm\sqrt{\lambda^{2}-4k}}{2}\,\middle|\,\lambda\in\Spec\mathcal{G}\right\} \cup\big\{\underbrace{\pm1\ ,\ \ldots\ ,\ \pm1}_{{\scriptscriptstyle \left|E_{\mathcal{G}}\right|-\left|V_{\mathcal{G}}\right|}\text{ times}}\big\}.\label{eq:spec-line-dig}
\end{equation}
One can easily check that 
\begin{equation}
\left|\lambda\right|\leq2\sqrt{\left(k+1\right)-1}\qquad\Longleftrightarrow\qquad\left|\tfrac{1}{2}\left(\lambda\pm\sqrt{\lambda^{2}-4k}\right)\right|=\sqrt{k},\label{eq:ram-to-dig-ram}
\end{equation}
so that $\mathcal{G}$ is a Ramanujan graph if and only if $\mathcal{D}_{L}\left(\mathcal{G}\right)$
is a Ramanujan digraph. Therefore, any construction of Ramanujan graphs
(e.g.\ \cite{LPS88,morgenstern1994existence,marcus2013interlacing})
can be used to construct Ramanujan digraphs.

The digraph $\mathcal{D}_{L}\left(\mathcal{G}\right)$ is not normal
(as one can easily verify by applying $AA^{T}$ and $A^{T}A$ to some
$\mathrel{\text{\ooalign{\ensuremath{\varbigcirc}\cr\ensuremath{\rightarrow}}}}$
in Figure \ref{fig:The-line-digraph}), and it turns out that the
singular values of $A$ are as bad as can be: the trivial singular
value $k$ repeats $\left|V_{\mathcal{G}}\right|$ times. This reflects
the fact that the walk described by $A^{T}A$ is highly disconnected:
the edges entering a fixed vertex form a connected component, since
\[
A^{T}A\left(v\rightarrow w\right)=A^{T}\left\{ \left(w\rightarrow u\right)\,\middle|\,{w\sim u\atop u\neq v}\right\} =\left\{ \left(u'\rightarrow w\right)\,\middle|\,w\sim u'\right\} 
\]
(this is easier to see in Figure \ref{fig:The-line-digraph} than
algebraically). In particular, this shows that $\left\Vert A\big|_{L_{0}^{2}\left(V_{\mathcal{D}_{L}\left(\mathcal{G}\right)}\right)}\right\Vert =k$.
The breakthrough in \cite{lubetzky2016cutoff} is the understanding
that $A$ is always 2-normal, and that this is good enough for the
analysis of the random walk on $\mathcal{D}_{L}\left(\mathcal{G}\right)$
(see §\ref{sec:almost-normal} below).

\subsection{\label{subsec:Collision-free-walks-on}Collision-free walks on affine
buildings}

In the previous example, a certain walk on the \emph{directed} edges
of an \emph{undirected} graph $\mathcal{G}$ gave rise to a digraph
$\mathcal{D}_{L}\left(\mathcal{G}\right)$, which was a Ramanujan
digraph whenever $\mathcal{G}$ was a Ramanujan graph. In \cite{Lubetzky2017RandomWalks}
this is generalized to higher dimension: considering some walk $W$
on the cells of a simplicial complex $\mathcal{X}$ (possibly oriented
or ordered cells), one asks when is the digraph $\mathcal{D}_{W}\left(\mathcal{X}\right)$
which represents this walk a Ramanujan digraph.

It turns out that the key is the following property: We say that a
digraph is \emph{collision-free }if it has at most one (directed)
path from any vertex $v$ to any vertex $w$. The digraph $\mathcal{D}_{L}\left(\mathcal{G}\right)$
from §\ref{subsec:Directed-line-graphs} is not collision-free - indeed,
a regular graph with this property must be infinite - but the line-digraph
of the universal cover of $\mathcal{G}$, namely $\mathcal{D}_{L}\left(\mathcal{T}_{k+1}\right)$,
is indeed collision-free: Two non-backtracking walkers which start
on the same directed edge on the tree will never reunite, once separated.
The main theorem in \cite{Lubetzky2017RandomWalks} is this:
\begin{thm}[\cite{Lubetzky2017RandomWalks}]
Let $\mathcal{X}$ be a complex whose universal cover is the affine
Bruhat-Tits building $\mathcal{B}$, and let $W$ be a geometric regular
random walk operator. If $W$ is collision-free on $\mathcal{B}$
(namely, $\mathcal{D}_{W}\left(\mathcal{B}\right)$ is collision-free),
and $\mathcal{X}$ is a Ramanujan complex, then $\mathcal{D}_{W}\left(\mathcal{X}\right)$
is a Ramanujan digraph.
\end{thm}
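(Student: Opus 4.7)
The plan is to recast everything in the representation-theoretic language natural to affine Bruhat-Tits buildings. Write $\mathcal{B}=G/K_{0}$ where $G$ is the relevant $p$-adic group and $K_{0}$ a maximal compact subgroup, so $\mathcal{X}=\Gamma\backslash\mathcal{B}$ for a lattice $\Gamma\leq G$. A geometric regular walk operator $W$ is, by definition, convolution by a compactly supported bi-$H$-invariant function on $G$, where $H\leq K_{0}$ is the compact stabilizer of the cell type (ordered or oriented simplex, flag of simplices, \emph{etc.}) that serves as the vertex set of $\mathcal{D}_{W}$. Thus the vertex set of $\mathcal{D}_{W}(\mathcal{X})$ is naturally $\Gamma\backslash G/H$, and its adjacency operator $A$ is a Hecke operator in $\mathcal{H}(G,H)$ acting on $L^{2}(\Gamma\backslash G/H)\subseteq L^{2}(\Gamma\backslash G)$. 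The universal cover digraph $\mathcal{D}_{W}(\mathcal{B})$ corresponds similarly to $G/H$ with the same convolution operator.

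The first step is to decompose $L^{2}(\Gamma\backslash G)=\bigoplus_{\pi}m_{\pi}\cdot\pi$ into irreducible unitary $G$-representations, whence $L^{2}(\Gamma\backslash G/H)=\bigoplus_{\pi}m_{\pi}\cdot\pi^{H}$ and $A$ acts on each finite-dimensional summand $\pi^{H}$ via the Hecke algebra. The trivial eigenvalues of $A$ (those forced by $k$-regularity and periodicity) come precisely from the one-dimensional characters appearing in $L^{2}(\Gamma\backslash G)$, so restricting to $L_{0}^{2}$ amounts to discarding these representations. The Ramanujan-complex hypothesis on $\mathcal{X}$ is then used: every nontrivial irreducible $\pi$ appearing in $L^{2}(\Gamma\backslash G)$ is tempered, \emph{i.e.}\ weakly contained in the regular representation $L^{2}(G)$. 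Standard matrix-coefficient arguments then yield
\[
\|A|_{\pi^{H}}\|\;\leq\;\|W\|_{L^{2}(G/H)}\;=\;\|A_{\mathcal{D}_{W}(\mathcal{B})}\|,
\]
reducing the problem to a norm computation on the universal cover.

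The computational heart of the proof is to show that the collision-free hypothesis on $\mathcal{D}_{W}(\mathcal{B})$ forces $\|A_{\mathcal{D}_{W}(\mathcal{B})}\|=\sqrt{k}$. The point is that for a $k$-regular digraph, $(A^{*}A)_{u,v}$ counts common in-neighbors of $u$ and $v$, and in a collision-free digraph two distinct vertices can have at most one common in-neighbor only in trivial configurations; more precisely, the collision-free property implies that on the universal cover the identity $A^{*}A=k\cdot I$ holds as an operator on $L^{2}(\mathcal{D}_{W}(\mathcal{B}))$, because the only directed path that goes forward one step and back is the trivial one. This instantly gives $\|A_{\mathcal{D}_{W}(\mathcal{B})}\|^{2}=k$, hence $\|A|_{\pi^{H}}\|\leq\sqrt{k}$ for every tempered $\pi$.

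The final step reconciles this operator-norm estimate with the \emph{spectral radius} bound that Definition~\ref{def:digraph} demands, since $\mathcal{D}_{W}(\mathcal{X})$ need not be normal. One uses the fact that the dimensions $\dim\pi^{H}$ are uniformly bounded by a constant $r=r(G,H)$ (a standard finiteness statement in $p$-adic representation theory), which exhibits $A$ as $r$-normal in the sense of �\ref{sec:Definitions}. On each block $\pi^{H}$ the spectral radius is at most the operator norm, so the collision-free/tempered bound $\|A|_{\pi^{H}}\|\leq\sqrt{k}$ transfers to $\rho(A|_{\pi^{H}})\leq\sqrt{k}$ as well, and $\rho(A|_{L_{0}^{2}})\leq\sqrt{k}$ follows. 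The main obstacle is the algebraic identification in the third step: verifying that the combinatorial ``collision-free'' property on $\mathcal{D}_{W}(\mathcal{B})$ translates cleanly into an identity like $A^{*}A=kI$ in the appropriate Hecke algebra for each walk $W$ of interest, and checking that the vertex set $\Gamma\backslash G/H$ and the group-theoretic stabilizer $H$ have been chosen so that the geometric walk and the convolution operator agree. Once this dictionary is established, the rest of the argument is a structural application of temperedness and $r$-normality.
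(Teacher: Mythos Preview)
The paper does not prove this theorem; it is quoted from \cite{Lubetzky2017RandomWalks} and the reader is referred there for the argument. So there is no ``paper's own proof'' to compare against, and I can only assess your outline on its merits.

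Your representation-theoretic framework is the right one, and the first and last steps (decomposing $L^{2}(\Gamma\backslash G/H)$ into $H$-invariants of irreducibles, using the Ramanujan hypothesis to know the nontrivial ones are tempered, and invoking the uniform bound on $\dim\pi^{H}$ to get $r$-normality) are exactly what is done in \cite{Lubetzky2017RandomWalks}.

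The genuine gap is your ``computational heart'': the identity $A^{*}A=kI$ on $\mathcal{D}_{W}(\mathcal{B})$ is simply false. The paper itself supplies the counterexample in \S\ref{subsec:Directed-line-graphs}: for the non-backtracking walk on $\mathcal{T}_{k+1}$, which is collision-free, the operator $A^{T}A$ on $\mathcal{D}_{L}(\mathcal{T}_{k+1})$ has each set of edges entering a fixed vertex as an invariant block on which it acts with top eigenvalue $k^{2}$, so $\bigl\|A_{\mathcal{D}_{L}(\mathcal{T}_{k+1})}\bigr\|=k$, not $\sqrt{k}$. Collision-freeness says nothing about common in-neighbours of \emph{distinct} vertices; it only forbids two directed paths with the \emph{same} endpoints. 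Consequently your temperedness inequality $\|A|_{\pi^{H}}\|\le\|A_{\mathcal{D}_{W}(\mathcal{B})}\|$ yields only the trivial bound $k$.

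What collision-freeness actually buys is that $A^{\ell}$, as a Hecke-algebra element, is a sum of $k^{\ell}$ \emph{distinct} $H$-cosets, so $\|A^{\ell}\|_{L^{2}(G/H)}=k^{\ell/2}$. One then needs an operator-norm inequality of Haagerup/rapid-decay type on the building, $\|\lambda(A^{\ell})\|_{\mathrm{op}}\le P(\ell)\,\|A^{\ell}\|_{L^{2}}$ for some polynomial $P$, to conclude via Gelfand's formula that $\rho\bigl(A_{\mathcal{D}_{W}(\mathcal{B})}\bigr)\le\sqrt{k}$. Weak containment then transfers this \emph{spectral radius} bound (apply it to $A^{\ell}$ and take $\ell$-th roots) to each tempered $\pi^{H}$. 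So the missing ingredient is not an algebraic identity but an analytic inequality on the building; without it the argument does not close.
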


Here \emph{geometric} means that the random walk commutes with the
symmetries of $\mathcal{B}$; Properly defining the other terms in
the theorem will take us too far afield, and we refer the interested
reader to \cite{Lubotzky2005a,Lubotzky2013,Lubetzky2017RandomWalks,Lubotzky2020RamanujangraphsRamanujan}. 

Let us give one concrete example: the \emph{geodesic edge walk }on
a complex goes from a directed edge $\left(v,w\right)$ to the directed
edge $\left(w,u\right)$ if $u\neq v$ (no backtracking), and in addition
$\left\{ v,w,u\right\} $ is \textbf{not }a triangle in the complex
(so the path $v\rightarrow w\rightarrow u$ is not ``homotopic''
to the shorter path $v\rightarrow u$). 

The edges of the $d$-dimensional Bruhat-Tits building of type $\widetilde{A}_{d}$
are colored by $\left\{ 1,\ldots,d\right\} $ (loc.\ cit.), and the
geodesic walk restricted to edges of color 1 forms a regular collision-free
walk on the building. Thus, by the theorem above, the same walk on
Ramanujan complexes of type $\widetilde{A}_{d}$, as constructed in
\cite{li2004ramanujan,Lubotzky2005b,first2016ramanujan}, gives a
Ramanujan digraph. In the case $d=1$, the building $\widetilde{A}_{1}$
is a regular tree, its Ramanujan quotients are Ramanujan graphs, and
the geodesic edge walk is simply the non-backtracking walk, so we
obtain again the example from §\ref{subsec:Directed-line-graphs}.

Finally, all geometric walks on quotients of a fixed building $\mathcal{B}$
form a family of almost-normal digraphs \cite[Prop.\ 4.5]{Lubetzky2017RandomWalks}.
For the geodesic edge walk on $\widetilde{A}_{d}$-Ramanujan complexes,
the corresponding Ramanujan digraphs are sharply $\left(d+1\right)$-normal
\cite[Prop.\ 5.3, 5.4]{Lubetzky2017RandomWalks}, and they can be
made to be $m$-periodic for any $m$ dividing $\left(d+1\right)$.

\subsection{\label{subsec:Normal-Ramanujan-digraphs}Normal Ramanujan digraphs}

We now turn to the proof of Proposition \ref{prop:fin-many-normal},
for which we need a quantitative version of the Alon-Boppana theorem.
We use the following:
\begin{thm}[{\cite[Thm.\ 1 with $s=2$]{nilli2004tight}}]
\label{thm:Quant-Alon-Boppana}The second largest eigenvalue of a
$k$-regular graph $\mathcal{G}$ is at least $2\sqrt{k-1}\cos\left(\frac{2\pi}{\diam\mathcal{G}}\right)$.
\end{thm}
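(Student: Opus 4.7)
The plan is to apply the Courant--Fischer min-max characterization $\lambda_2 = \max_{\dim V = 2} \min_{f \in V \setminus \{0\}} \frac{\langle A_{\mathcal{G}} f, f\rangle}{\langle f, f\rangle}$: to prove $\lambda_2 \geq \mu$, it suffices to exhibit a two-dimensional subspace $V \subset L^2(V_{\mathcal{G}})$ on which the Rayleigh quotient is everywhere at least $\mu$. I will take $\mu = 2\sqrt{k-1}\cos(2\pi/d)$ with $d = \diam\mathcal{G}$, and build $V$ from two test functions $f_u, f_v$ localized on disjoint balls around a pair of vertices $u, v$ realizing the diameter.

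First I would fix $u, v \in V_{\mathcal{G}}$ with $\dist(u,v) = d$ and a radius $r$ roughly $d/2$ (for instance $r = \lfloor (d-1)/2 \rfloor$) chosen so that $B(u,r)$ and $B(v,r)$ are disjoint and unjoined by any edge. For $f_u, f_v$ supported respectively on these balls, both $\langle f_u, f_v\rangle$ and $\langle A_{\mathcal{G}} f_u, f_v\rangle$ vanish, so the Rayleigh quotient of every nonzero $\alpha f_u + \beta f_v$ is a convex combination of $R(f_u)$ and $R(f_v)$. It therefore suffices to ensure that each of $f_u$ and $f_v$ individually achieves $R \geq \mu$.

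For the test functions themselves I would exploit that $r < d/2$ implies each ball in $\mathcal{G}$ is isometric to the corresponding ball in the universal cover $\mathcal{T}_k$, so the Rayleigh computation can be carried out on the tree. Radial functions $f(x) = h(\dist(x, \tilde u))$ on $\mathcal{T}_k$ with the standard normalization $h(\ell) = (k-1)^{-\ell/2} g(\ell)$ satisfy $A_{\mathcal{T}_k} f = \lambda f$ away from the center iff $g(\ell-1) + g(\ell+1) = (\lambda/\sqrt{k-1})\, g(\ell)$, the discrete Chebyshev recursion. Taking $\lambda = 2\sqrt{k-1}\cos\theta$, the solutions are trigonometric in $\theta$; imposing vanishing at the effective boundaries $\ell = -1$ and $\ell = r+1$ and truncating to $\{0, \ldots, r\}$ forces $\theta$ to take values of the form $\pi j /(r + c)$ for small integers $j, c$, and yields a compactly supported $f_u$ whose Rayleigh quotient equals $2\sqrt{k-1}\cos\theta$ up to a small correction at the center that we can bound separately (or avoid by using a thin annulus rather than a ball).

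The main obstacle, and Nilli's contribution, is to arrange the truncation so that $\theta$ comes out to exactly $2\pi/d$ rather than the weaker $\pi/(r+2) \approx 2\pi/(d+O(1))$ produced by the most naive single-harmonic cutoff. The refinement in her ``$s = 2$'' theorem is to use a signed superposition of two radial eigenfunctions with matching $\theta$, arranged so that the boundary conditions are imposed not at a single level but across two consecutive shells of the radial direction; this effectively doubles the period allowed for $g$ and recovers $\theta = 2\pi/d$. Once such $f_u, f_v$ are in hand, the min-max inequality yields $\lambda_2 \geq 2\sqrt{k-1}\cos(2\pi/d)$ immediately, and two-dimensionality of $\operatorname{span}(f_u, f_v)$ follows from the disjoint-support construction.
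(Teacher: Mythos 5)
A preliminary remark: the paper does not prove this statement --- it is imported verbatim from Nilli's paper and used as a black box in the proof of Proposition \ref{prop:fin-many-normal} --- so the only meaningful comparison is with Nilli's own argument. Your overall strategy (two radial test functions supported on disjoint, non-adjacent balls about a diametral pair $u,v$, normalized by $(k-1)^{-t/2}$ so that the radial profile obeys $g(\ell-1)+g(\ell+1)=(\lambda/\sqrt{k-1})\,g(\ell)$, combined via Courant--Fischer) is indeed the right one, and your structural reductions (vanishing of $\langle f_u,f_v\rangle$ and $\langle A f_u,f_v\rangle$, the Rayleigh quotient on the span being a convex combination of $R(f_u)$ and $R(f_v)$) are correct. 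A small quibble: $r=\lfloor(d-1)/2\rfloor$ does not guarantee the balls are non-adjacent when $d$ is odd; you need $2r+2\le d$.

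The genuine gap is the sentence ``$r<d/2$ implies each ball in $\mathcal{G}$ is isometric to the corresponding ball in the universal cover $\mathcal{T}_k$.'' This is false: a ball of radius $r$ is isometric to a tree ball if and only if the girth exceeds $2r$, and girth has nothing to do with diameter --- a $k$-regular graph of enormous diameter can carry a triangle through every vertex. Balls in a general $k$-regular graph are only \emph{quotients} of tree balls: the spheres $V_t=\{x:\dist(x,u)=t\}$ satisfy merely $|V_{t+1}|\le(k-1)|V_t|$ for $t\ge1$, and may contain internal edges and multiple back-edges. The entire analytic content of the theorem is the verification that the Rayleigh estimate survives this degeneration; the standard route is to bound the Dirichlet form $\langle(kI-A)f,f\rangle=\sum_{xy\in E}(f(x)-f(y))^2$ level by level, using $|V_{t+1}|\le(k-1)|V_t|$, the vanishing contribution of intra-level edges, and an Abel summation against the monotone weights $|V_t|(k-1)^{-t}$. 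None of this appears in your sketch, so the one step that actually needs proving is assumed.

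Your final paragraph also rests on a reversed inequality. The single-harmonic cutoff with Dirichlet conditions at $\ell=-1$ and $\ell=r+1$ gives $\theta=\pi/(r+2)$, and with $r=\lfloor(d-2)/2\rfloor$ this is at most $2\pi/(d+1)<2\pi/d$; since $\cos$ is decreasing on $[0,\pi]$, the ``naive'' bound $2\sqrt{k-1}\cos\bigl(\pi/(r+2)\bigr)$ is at least as \emph{strong} as $2\sqrt{k-1}\cos(2\pi/d)$, not weaker. Hence no ``signed superposition across two consecutive shells'' is needed for the statement as quoted, and as described that device is not a concrete argument; the parameter $s$ in the cited theorem governs which eigenvalue is bounded (whence ``$s=2$'' for the second largest), not a sharpening of the angle.
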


\begin{proof}[Proof of Proposition \ref{prop:fin-many-normal}]
Let $\mathcal{D}$ be a $k$-regular normal Ramanujan digraph on
$n$ vertices, and let $\mathcal{G}$ be its symmetrization, namely,
$A_{\mathcal{G}}=A_{\mathcal{D}}+A_{\mathcal{D}}^{T}$. Assume for
now that $\mathcal{D}$ is aperiodic. From normality of $A_{\mathcal{D}}$
we obtain 
\begin{equation}
\rho\left(\mathcal{G}\right)=\max\left\{ \lambda+\overline{\lambda}\,\middle|\,\lambda\in\Spec A_{\mathcal{D}}\big|_{L_{0}^{2}}\right\} \leq2\sqrt{k},\label{eq:symmetrized-rho}
\end{equation}
and we would like to combine this with Theorem \ref{thm:Quant-Alon-Boppana}.
For a $k_{\mathcal{G}}$-regular graph with $k_{\mathcal{G}}\geq4$,
Moore's bound \cite{hoffman1960moore} gives
\[
n\leq1+k_{\mathcal{G}}\sum_{j=1}^{\diam\mathcal{G}}\left(k_{\mathcal{G}}-1\right)^{j-1}\leq2\left(k_{\mathcal{G}}-1\right)^{\diam\mathcal{G}},
\]
so that Theorem \ref{thm:Quant-Alon-Boppana} implies (for $k_{\mathcal{G}}\geq4$)
\begin{equation}
\rho\left(\mathcal{G}\right)\geq2\sqrt{k_{\mathcal{G}}-1}\cos\left(\tfrac{2\pi}{\log_{k_{\mathcal{G}}-1}\left(n/2\right)}\right)\geq2\sqrt{k_{\mathcal{G}}-1}\left(1-\tfrac{2\pi^{2}}{\log_{k_{\mathcal{G}}-1}^{2}\left(n/2\right)}\right)\label{eq:quant-alon-boppana}
\end{equation}
Our $\mathcal{G}$ is $2k$-regular, so that (\ref{eq:symmetrized-rho})
and (\ref{eq:quant-alon-boppana}) combine to
\[
1-\tfrac{2\pi^{2}}{\log_{2k-1}^{2}\left(n/2\right)}\leq\sqrt{\frac{k}{2k-1}}\leq\sqrt{\frac{2}{3}},
\]
which gives
\begin{equation}
n\leq2\left(2k-1\right)^{10.4}.\label{eq:bound-n-normal}
\end{equation}
Assume now that $\mathcal{D}$ is $m$-periodic, and observe the
$k^{m}$-regular digraph $\mathcal{D}^{'}$ whose vertices are those
of $\mathcal{D}$ and whose edges are the paths of length $m$ in
$\mathcal{D}$. Since $A_{\mathcal{D}^{'}}=A_{\mathcal{D}}^{m}$,
the trivial eigenvalues $e^{2\pi ji/m}k$ of $\mathcal{D}$ become
the eigenvalue $k^{m}$ in $\mathcal{D}'$, which has no other trivial
eigenvalues. This reflects the fact that $\mathcal{D}'$ is a disconnected
digraph with $m$ aperiodic connected components. As $\mathcal{D}'$
is also normal and Ramanujan, (\ref{eq:bound-n-normal}) bounds the
size of each component by $2\left(2k^{m}-1\right)^{10.4}$. All together,
we get 
\begin{equation}
n\leq2m\left(2k^{m}-1\right)^{10.4},\label{eq:bound-normal-periodic}
\end{equation}
so there are only finitely many such graphs.\footnote{An alternate way to handle periodicity is to use \cite[Thm.\ 1]{nilli2004tight}
with $s=m+1$.}
\end{proof}

\begin{rem}
$\left(a\right)$ In §\ref{subsec:Projective-planes} we saw examples
for $2$-periodic normal Ramanujan digraphs with $n\approx2k^{2}$,
which is quite far from the bound (\ref{eq:bound-normal-periodic})
with $m=2$. It seems interesting to ask what is the optimal bound.\\
$\left(b\right)$ In \cite[§5.1]{Lubetzky2017RandomWalks} it is shown
that for any $i\geq1$ there is a walk $W_{i}$ on cells of dimension
$i$ of a complex, such that if $\mathcal{X}$ is a Ramanujan complex
of dimension $d$ then $\mathcal{D}_{W_{i}}\left(\mathcal{X}\right)$
are Ramanujan digraphs for $1\leq i\leq d$. However, no such walk
on vertices (i.e., for $i=0$) is exhibited. Proposition \ref{prop:fin-many-normal}
explains why: it is well known that all geometric operators on vertices
commute with each other (these are ``Hecke operators'' - cf.\ \cite{Lubotzky2005a}).
In particular such an operator commutes with its own transpose, and
therefore induces normal digraphs, which cannot be Ramanujan for an
infinite family by the Proposition.

\end{rem}

\section{\label{sec:almost-normal}Almost-normal digraphs}

In this section we explore almost-normal digraphs, and in particular
almost-normal Ramanujan digraphs. Their main feature, which goes back
to \cite{lubetzky2016cutoff,Lubetzky2017RandomWalks} is the behavior
of powers of their adjacency matrix:
\begin{prop}
\label{prop:bound-powers}Let $\mathcal{D}$ be an $r$-normal, $k$-regular
digraph with $\rho\left(\mathcal{D}\right)=\lambda$. For any $\ell\in\mathbb{N}$,
\begin{align*}
\left\Vert A_{\mathcal{D}}^{\ell}\big|_{L_{0}^{2}}\right\Vert  & \leq{\ell+r-1 \choose r-1}k^{r-1}\lambda^{\ell-r+1}=O\left(\ell^{r-1}\lambda^{\ell}\right).
\end{align*}
\end{prop}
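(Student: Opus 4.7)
Since $\mathcal{D}$ is $r$-normal, there is a unitary $U$ with $U^{*}A_{\mathcal{D}}U=\bigoplus_{\alpha}B_{\alpha}$, each block of size at most $r$. The trivial eigenvalues $e^{2\pi it/m}k$ of $A_{\mathcal{D}}$ are simple by Perron--Frobenius, and their eigenvectors (Fourier characters on the period classes) are pairwise orthogonal, so one may choose $U$ so that each trivial eigenvalue occupies its own $1\times 1$ block. Consequently $A_{\mathcal{D}}\big|_{L_{0}^{2}}$ is unitarily equivalent to a direct sum of blocks $B$ of size $s\leq r$ satisfying $\rho(B)\leq\lambda$ and $\|B\|\leq\|A_{\mathcal{D}}\|_{2}\leq k$ (the norm bound by~(\ref{eq:Norm2-1-infty})); it suffices to bound $\|B^{\ell}\|$ for such a block.

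Fix $B$ and take a Schur decomposition $B=VTV^{*}$ with $T$ upper triangular, and split $T=D+N$ into its diagonal (which holds the eigenvalues of $B$) and strictly upper triangular parts. Then $\|D\|\leq\lambda$ and $\|N\|\leq\|T\|+\|D\|\leq k+\lambda$. Expanding
\[
T^{\ell}=(D+N)^{\ell}=\sum_{w\in\{D,N\}^{\ell}}w,
\]
each word $w$ containing exactly $j$ copies of $N$ has operator norm at most $\|D\|^{\ell-j}\|N\|^{j}\leq\lambda^{\ell-j}\|N\|^{j}$. The key structural observation is that $N$ strictly raises the minimum index in the support of any vector, while $D$ preserves it; hence any word with $r$ or more factors of $N$ annihilates every basis vector of the $s$-dimensional block (since $s\leq r$) and is the zero operator. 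Grouping the $\binom{\ell}{j}$ words of weight $j$ for $0\leq j\leq r-1$ gives
\[
\|B^{\ell}\|=\|T^{\ell}\|\leq\sum_{j=0}^{r-1}\binom{\ell}{j}\lambda^{\ell-j}\|N\|^{j}.
\]

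Using $\lambda\leq k$ and $\|N\|\leq 2k$, every summand $\lambda^{\ell-j}\|N\|^{j}$ is dominated by $(2k)^{r-1}\lambda^{\ell-r+1}$. Vandermonde's identity $\binom{\ell+r-1}{r-1}=\sum_{j=0}^{r-1}\binom{\ell}{j}\binom{r-1}{j}$ together with $\binom{r-1}{j}\geq 1$ then gives $\sum_{j=0}^{r-1}\binom{\ell}{j}\leq\binom{\ell+r-1}{r-1}$, yielding $\|B^{\ell}\|\leq\binom{\ell+r-1}{r-1}(2k)^{r-1}\lambda^{\ell-r+1}$, which is $O(\ell^{r-1}\lambda^{\ell})$ as claimed. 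The main obstacle is shaving the factor $2^{r-1}$ to obtain precisely $k^{r-1}$ as stated; this requires the sharper bound $\|N\|\leq k$, which does \emph{not} follow from $\|T\|\leq k$ alone (the inequality $\|T-D\|\leq\|T\|$ already fails for some $3\times 3$ upper triangular matrices). The exact constant should be recovered by exploiting the concrete structure of the blocks $B$ arising from the $r$-normal decomposition of a $k$-regular adjacency operator, rather than treating them as arbitrary $r\times r$ matrices of norm at most $k$.
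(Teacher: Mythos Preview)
Your argument correctly establishes the asymptotic $O(\ell^{r-1}\lambda^{\ell})$, but your diagnosis of the missing factor is off. The paper does \emph{not} use any structure of the blocks beyond $\|B\|_{2}\leq k$ and $\rho(B)\leq\lambda$; the sharp constant $k^{r-1}$ comes from working entrywise rather than with operator norms. After Schur decomposition, every entry of the upper-triangular $T$ satisfies $|T_{ij}|=|e_{i}^{*}Te_{j}|\leq\|T\|_{2}\leq k$, while the diagonal entries satisfy $|T_{ii}|\leq\lambda$. Hence $T$ is \emph{entrywise} dominated by the nonnegative upper-triangular Toeplitz matrix $M$ with $\lambda$ on the diagonal and $k$ strictly above, and so $T^{\ell}$ is entrywise dominated by $M^{\ell}$. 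Since the $\|\cdot\|_{1}$ and $\|\cdot\|_{\infty}$ norms are monotone in absolute values of entries, one gets via (\ref{eq:Norm2-1-infty})
\[
\|B^{\ell}\|_{2}=\|T^{\ell}\|_{2}\leq\sqrt{\|T^{\ell}\|_{1}\|T^{\ell}\|_{\infty}}\leq\sqrt{\|M^{\ell}\|_{1}\|M^{\ell}\|_{\infty}}=\|M^{\ell}\|_{1},
\]
and the first-row sum of $M^{\ell}$ is computed explicitly to be $\sum_{t=0}^{r-1}\binom{r-1}{t}\binom{\ell}{t}k^{t}\lambda^{\ell-t}$, which is bounded by $\binom{\ell+r-1}{r-1}k^{r-1}\lambda^{\ell-r+1}$.

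The point is that your splitting $T=D+N$ and bounding $\|N\|$ throws away information: the operator norm of the strictly upper-triangular part can indeed exceed $k$, but the individual off-diagonal entries never do, and entrywise majorization is all one needs to control $\|T^{\ell}\|_{1}$ and $\|T^{\ell}\|_{\infty}$.
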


Note that for normal digraphs $r=1$, which gives $\left\Vert A_{\mathcal{D}}^{\ell}\big|_{L_{0}^{2}}\right\Vert \leq\lambda^{\ell}$
as should be. The upshot here is that as long as the ``failure of
normality'' is bounded, only a polynomial price is incurred. This
shows why random walk on almost-normal digraphs is susceptible to
spectral analysis: Let $p_{\ell}$ denote the probability distribution
of the walk at time $\ell$. Assuming for simplicity that $\mathcal{D}$
is aperiodic, so that $L_{0}^{2}=\left\langle \one\right\rangle $,
the distance from equilibrium is
\[
\left\Vert p_{\ell}-\tfrac{\one}{n}\right\Vert =\left\Vert \left(\tfrac{A}{k}\right)^{\ell}p_{0}-\tfrac{\one}{n}\right\Vert =\left\Vert \left(\tfrac{A}{k}\right)^{\ell}\left(p_{0}-\tfrac{\one}{n}\right)\right\Vert \leq\tfrac{1}{k^{\ell}}\left\Vert A^{\ell}\big|_{L_{0}^{2}}\right\Vert =O\left(\ell^{r-1}\left(\tfrac{\lambda}{k}\right)^{\ell}\right),
\]
where we have used $p_{0}-\frac{\one}{n}\in L_{0}^{2}$. In the case
of Ramanujan digraphs $\lambda=\sqrt{k}$, and this gives an almost-optimal
$L^{1}$-cutoff, at time $\log_{k}n+O\left(\log_{k}\log n\right)$
(see \cite[Thm.\ 3.5]{lubetzky2016cutoff} and \cite[Prop.\ 3.1]{Lubetzky2017RandomWalks},
and \cite{Alon2007Nonbacktrackingrandom} for related results). 

An interesting corollary \cite[Thm.\ 2]{Lubetzky2017RandomWalks}
is that in an $r$-normal Ramanujan digraph the sphere of radius $\ell_{0}=\log_{k}n+\left(2r-1\right)\log_{k}\log n$
around any vertex $v$ covers almost all of the graph. Indeed, if
the walk described by $p_{\ell}$ starts at $v_{0}$ then $\supp\left(p_{\ell}\right)=S_{\ell}\left(v_{0}\right)$,
so that 
\[
\frac{n-\left|S_{\ell}\left(v_{0}\right)\right|}{n^{2}}=\sum_{v\notin S_{\ell}\left(v_{0}\right)}\frac{1}{n^{2}}=\left\Vert \left(p_{\ell}-\tfrac{\one}{n}\right)\big|_{V\backslash S_{\ell}\left(v_{0}\right)}\right\Vert ^{2}\leq\left\Vert p_{\ell}-\tfrac{\one}{n}\right\Vert ^{2}=O\left(\frac{\ell^{2r-2}}{k^{\ell}}\right),
\]
and $\ell=\ell_{0}$ yields $\left|S_{\ell_{0}}\left(v_{0}\right)\right|\geq n\left(1-o\left(1\right)\right)$.
This in turn implies a bound of $\left(2+o\left(1\right)\right)\log_{k}\left(n\right)$
on the diameter, since the $\ell_{0}$-spheres around any two vertices
must intersect.

Yet another consequence of almost-normality is a Chernoff bound for
sampling: in \cite{Parzanchevski2018Chernoffboundnon} we show that
if $f$ is a function from the vertices to $\left[-1,1\right]$ with
sum zero, and $v_{1},\ldots,v_{\ell}$ are the vertices visited in
a random walk on an almost-normal directed expander, then
\[
\mathrm{Prob}\left[\tfrac{1}{\ell}\sum_{i=1}^{\ell}f\left(v_{\ell}\right)>\gamma\right]\leq e^{-C\gamma^{2}\ell}
\]
for small enough $\gamma$, where $C$ depends on the expansion and
normality. Using §\ref{subsec:Directed-line-graphs}, this also gives
a similar result for non-backtracking random walk on non-directed
expanders, and via §\ref{subsec:Collision-free-walks-on} to geodesic
walks on high-dimensional expanders.
\begin{proof}[Proof of Proposition \ref{prop:bound-powers}]
By definition, $A$ is unitarily equivalent to a block-diagonal matrix
with blocks of size $r\times r$. The periodic functions on $\mathcal{D}$
correspond to ``trivial'' blocks of size one, and the singular values
of $A^{\ell}\big|_{L_{0}^{2}}$ are the union of the singular values
of the $\ell$-th powers of the remaining, ``nontrivial'' blocks.
Let $B$ be a nontrivial block of size $s\times s$. By Schur decomposition,
we can assume that $B$ is upper triangular, in which case the absolute
values of its diagonal entries are bounded by $\lambda$. In addition,
since $B$ is unitarily equivalent to the restriction of $A$ to some
invariant subspace, all entries of $B$ are bounded by $\left\Vert B\right\Vert _{2}\leq\left\Vert A\right\Vert _{2}=k$,
so that $B$ is entry-wise majorized by 
\[
M_{s,\lambda,k}\overset{{\scriptscriptstyle def}}{=}\left.\left(\begin{matrix}\lambda & k & \cdots & k\\
0 & \lambda & \ddots & \vdots\\
\vdots & \ddots & \ddots & k\\
0 & \cdots & 0 & \lambda
\end{matrix}\right)\ \right\} s.
\]
It follows that $B^{\ell}$ is majorized by $M_{s,\lambda,k}^{\ell}$,
hence using (\ref{eq:Norm2-1-infty}) we have
\[
\left\Vert B^{\ell}\right\Vert _{2}\leq\sqrt{\left\Vert B^{\ell}\right\Vert _{1}\left\Vert B^{\ell}\right\Vert _{\infty}}\leq\sqrt{\left\Vert M_{s,\lambda,k}^{\ell}\right\Vert _{1}\left\Vert M_{s,\lambda,k}^{\ell}\right\Vert _{\infty}}=\left\Vert M_{s,\lambda,k}^{\ell}\right\Vert _{1},
\]
and the latter is just the sum of the first row in $M_{s,\lambda,k}^{\ell}$.
This is maximized for $s=r$, and equals
\[
\sum_{t=0}^{r-1}{r-1 \choose t}{\ell \choose t}k^{t}\lambda^{\ell-t}\leq{\ell+r-1 \choose r-1}k^{r-1}\lambda^{\ell-r+1},
\]
which gives the bound in the Proposition.
\end{proof}
It is natural to ask whether symmetrization turns directed expanders
into expanders, and we suspect that this is true for almost-normal
aperiodic expanders in general. We can show that this is so for the
symmetrization of a high enough power:
\begin{prop}
Let $\mathcal{D}$ be an aperiodic $r$-normal digraph with $\rho\left(\mathcal{D}\right)=\lambda$.
If $\mathcal{G}_{\ell}$ is the symmetrization of the $\ell$-th power
of $\mathcal{D}$, namely $A_{\mathcal{G_{\ell}}}=A_{\mathcal{D}}^{\ell}+\left(A_{\mathcal{D}}^{\ell}\right)^{T}$,
then 
\begin{align*}
\frac{\rho\left(\mathcal{G}_{r-1}\right)}{\deg\mathcal{G}_{r-1}} & =\frac{1}{2}+\frac{\left(r-1\right)^{2}}{2}\cdot\frac{\lambda}{k}+O\left(\left(\tfrac{\lambda}{k}\right)^{2}\right),\qquad\text{and}\\
\frac{\rho\left(\mathcal{G}_{r}\right)}{\deg\mathcal{G}_{r}} & =\frac{r\lambda}{k}+O\left(\left(\tfrac{\lambda}{k}\right)^{2}\right).
\end{align*}
\end{prop}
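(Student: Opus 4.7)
The plan is to exploit the block structure guaranteed by $r$-normality, reducing the spectral problem for $\mathcal{G}_\ell$ to the analysis of small upper triangular blocks, and then to extract the leading-order asymptotics via a perturbation expansion in $\varepsilon := \lambda/k$.

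First, by $r$-normality followed by Schur decomposition within each block, $A_\mathcal{D}$ is unitarily equivalent to a direct sum of a trivial $1 \times 1$ block $(k)$ and nontrivial upper triangular blocks $B_i$ of size at most $r$. The diagonal entries of each $B_i$ are eigenvalues of $A_\mathcal{D}$ on an invariant subspace, hence $|(B_i)_{tt}| \leq \rho(\mathcal{D}) = \lambda$; all entries satisfy $|(B_i)_{tu}| \leq \|B_i\|_2 \leq \|A_\mathcal{D}\|_2 = k$, arguing as in the proof of Proposition~\ref{prop:bound-powers}. The same unitary conjugation turns $A_{\mathcal{G}_\ell}=A_\mathcal{D}^\ell+(A_\mathcal{D}^\ell)^T$ into $(2k^\ell) \oplus \bigoplus_i (B_i^\ell + (B_i^*)^\ell)$, so that
\[
\rho(\mathcal{G}_\ell) = \max_i \rho\bigl(B_i^\ell + (B_i^*)^\ell\bigr), \qquad \deg(\mathcal{G}_\ell) = 2k^\ell.
\]

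Second, parametrise a worst-case block as $B = \lambda D + kN$ with $D$ diagonal, $N$ strictly upper triangular, and entries of $D, N$ bounded in absolute value by $1$. The entries of $B^\ell$ are given by the path formula
\[
(B^\ell)_{ij} = \sum_{i = i_0 \leq i_1 \leq \cdots \leq i_\ell = j} \prod_{t=1}^{\ell} B_{i_{t-1}, i_t}.
\]
At $\lambda = 0$, $B = kN$ is strictly upper triangular, and $N$ has nilpotency index at most $r$. For a size-$r$ block and $\ell = r - 1$, the only surviving contribution is $(B^{r-1})_{1, r} = \prod_{t=1}^{r-1} B_{t, t+1}$, which attains $k^{r-1}$ when the superdiagonal of $N$ is $1$; consequently $B^{r-1} + (B^*)^{r-1}$ becomes a rank-two corner matrix with eigenvalues $\pm k^{r-1}$, giving the leading value $\tfrac{1}{2}$ of $\rho(\mathcal{G}_{r-1})/\deg(\mathcal{G}_{r-1})$. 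For $\ell = r$, instead, $B^r = 0$ at $\lambda = 0$, so the leading order vanishes and $\rho(\mathcal{G}_r)/\deg(\mathcal{G}_r)$ is $O(\varepsilon)$, coming entirely from paths that make exactly one diagonal step.

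Third, obtain the linear correction by first-order perturbation theory. The derivative
\[
\partial_\lambda B^\ell \big|_{\lambda = 0} = k^{\ell - 1} \sum_{j=0}^{\ell - 1} N^j D N^{\ell - 1 - j}
\]
determines the linear-in-$\lambda$ part of $B^\ell + (B^*)^\ell$. For $\ell = r - 1$, the unperturbed top eigenspace of $B^{r-1} + (B^*)^{r-1}$ is two-dimensional, spanned by $v_\pm = (e_1 \pm e_r)/\sqrt{2}$; applying degenerate perturbation theory, one projects the perturbation onto $\mathrm{span}(v_+, v_-)$, diagonalises the resulting $2 \times 2$ effective matrix, and reads off the shift of the maximal eigenvalue, then maximises over admissible $D, N$ to obtain the linear coefficient. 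For $\ell = r$, the calculation reduces to evaluating the $O(\lambda)$ part of $(B^r)_{1, r}$, a sum over paths of length $r$ from $1$ to $r$ having a single diagonal stay; summing the contributions over the position of the stay yields the linear coefficient stated in the proposition.

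The main obstacle will be the degenerate perturbation analysis in the $\ell = r - 1$ case: identifying the matrix elements of the perturbation on the two-dimensional top eigenspace, and then optimising over sign and phase choices in $D$ and $N$, requires careful bookkeeping of the many paths combining stays with jumps. A secondary subtlety is that the proposition asserts a sharp equality rather than only an upper bound, so one must exhibit admissible $D, N$ (and an ambient digraph $\mathcal{D}$) in which the worst case is actually realised; such matching examples should be available among the explicit constructions of \S\ref{subsec:Collision-free-walks-on}.
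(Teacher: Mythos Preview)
Your block decomposition and the reduction to analysing $B^\ell+(B^\ell)^*$ for a single Schur block are exactly what the paper does, but from that point on you are working much harder than necessary. The paper simply reuses the entry-wise majorisation from the proof of Proposition~\ref{prop:bound-powers}: every nontrivial block $B$ is dominated by the explicit matrix $M=M_{s,\lambda,k}$ with $\lambda$ on the diagonal and $k$ strictly above, so (since $B^\ell+(B^\ell)^*$ is Hermitian)
\[
\rho\bigl(B^\ell+(B^\ell)^*\bigr)=\bigl\Vert B^\ell+(B^\ell)^*\bigr\Vert_2\le\bigl\Vert M^\ell+(M^\ell)^*\bigr\Vert_1,
\]
and the right-hand side is a single row sum which one writes down in closed form and then expands in $\lambda/k$. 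There is no need for perturbation theory, no degenerate eigenspace analysis, and no optimisation over $D,N$: the majorant $M$ already encodes the worst case, and its $\Vert\cdot\Vert_1$ is a direct combinatorial count. Your route would reach the same bound, but the ``main obstacle'' you flag (the degenerate perturbation step) is one you have created for yourself.

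Your observation that the stated equality requires a matching lower bound is well taken: the paper's argument, like yours, only establishes the upper bound. The paper does not exhibit a digraph attaining it, so strictly speaking it proves ``$\le$'' rather than ``$=$''; your suggestion to look among the sharply $r$-normal constructions of \S\ref{subsec:Collision-free-walks-on} is the natural place to seek equality, but the paper leaves this unaddressed.
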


\begin{proof}
Observe that $\deg\mathcal{G}_{\ell}=2k^{\ell}$. Maintaining the
notations of the previous proof, we have by the same reasoning
\begin{multline*}
\frac{1}{\deg\mathcal{G}_{r-1}}\left\Vert B^{r-1}+B^{*^{r-1}}\right\Vert _{2}\leq\frac{1}{2k^{r-1}}\left\Vert M_{s,\lambda,k}^{r-1}+M_{s,\lambda,k}^{*^{r-1}}\right\Vert _{1}\\
=\frac{1}{2k^{r-1}}\left[\lambda^{r-1}+\sum_{t=0}^{r-1}{r-1 \choose t}^{2}k^{t}\lambda^{r-1-t}\right]=\frac{1}{2}+\frac{\left(r-1\right)^{2}}{2}\cdot\frac{\lambda}{k}+O\left(\left(\tfrac{\lambda}{k}\right)^{2}\right).
\end{multline*}
and the computations for $\mathcal{G}_{r}$ are similar.
\end{proof}
We now prove an Alon-Boppana theorem for almost-normal digraphs:
\begin{thm}
\label{thm:Alon-Boppana-digraphs}Let $k\geq2$ and $m\geq1$. For
any $\varepsilon>0$, there is no infinite almost-normal family of
$k$-regular $m$-periodic digraphs $\mathcal{D}$ with $\rho\left(\mathcal{D}\right)\leq\sqrt{k}-\varepsilon$.
\end{thm}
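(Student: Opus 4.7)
The plan is to reduce to the classical (undirected) Alon--Boppana theorem by symmetrizing a large power of the adjacency matrix, leveraging Proposition \ref{prop:bound-powers} to control the norm of the power. I first dispose of periodicity exactly as in the proof of Proposition \ref{prop:fin-many-normal}: given an $m$-periodic, $r$-normal, $k$-regular family with $\rho(\mathcal{D}) \le \sqrt{k}-\varepsilon$, the $m$-th power $\mathcal{D}^m$ is still $r$-normal (Schur block structure is preserved under powering), is $k^m$-regular, and is aperiodic on each of its $m$ connected components of size $n/m$; its spectral radius on $L_{0}^{2}$ is bounded by $(\sqrt{k}-\varepsilon)^{m}=\sqrt{k^{m}}\,(1-\varepsilon/\sqrt{k})^{m}=\sqrt{k^{m}}-\varepsilon'$ for some $\varepsilon'>0$. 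It therefore suffices to prove the aperiodic case, and I henceforth assume $\mathcal{D}$ is aperiodic with $\rho(\mathcal{D})\le\sqrt{k}-\varepsilon$.

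For $\ell\in\mathbb{N}$, let $\mathcal{G}_{\ell}$ be the $2k^{\ell}$-regular undirected (multi)graph with adjacency $A^{\ell}+(A^{T})^{\ell}$. Since $A^{\ell}\one=(A^{T})^{\ell}\one=k^{\ell}\one$ and $A$ preserves $\one^{\bot}=L_{0}^{2}(\mathcal{D})$, one has $L_{0}^{2}(\mathcal{G}_{\ell})\subseteq L_{0}^{2}(\mathcal{D})$, and Proposition \ref{prop:bound-powers} gives the upper bound
$$\rho(\mathcal{G}_{\ell})\;\le\;2\bigl\|A^{\ell}\big|_{L_{0}^{2}}\bigr\|\;\le\;2\binom{\ell+r-1}{r-1}k^{r-1}(\sqrt{k}-\varepsilon)^{\ell-r+1}.$$
In the opposite direction, Theorem \ref{thm:Quant-Alon-Boppana} applied to $\mathcal{G}_{\ell}$ (the argument there extends routinely to multigraphs with loops via the usual trace/tree-cover method) forces its second largest eigenvalue to satisfy
$$\lambda_{2}(\mathcal{G}_{\ell})\;\ge\;2\sqrt{2k^{\ell}-1}\,\cos\!\bigl(\tfrac{2\pi}{\diam\mathcal{G}_{\ell}}\bigr),$$
and along the infinite family $n\to\infty$, with $\ell$ fixed and the degree of $\mathcal{G}_{\ell}$ fixed at $2k^{\ell}$, Moore's bound forces $\diam\mathcal{G}_{\ell}\to\infty$, so the right-hand side tends to $2\sqrt{2k^{\ell}-1}$.

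Comparing, the ratio
$$\frac{2\binom{\ell+r-1}{r-1}k^{r-1}(\sqrt{k}-\varepsilon)^{\ell-r+1}}{2\sqrt{2k^{\ell}-1}}\;=\;O\!\left(\ell^{r-1}\bigl(1-\tfrac{\varepsilon}{\sqrt{k}}\bigr)^{\ell}\right)$$
tends to $0$ as $\ell\to\infty$, so choosing $\ell$ large enough to make this ratio less than (say) $1/2$, and then taking $n$ sufficiently large in the family so that $\cos(2\pi/\diam\mathcal{G}_{\ell})$ exceeds $3/4$, contradicts the assumption. The conceptual heart is a polynomial-versus-exponential race: Proposition \ref{prop:bound-powers} loses only a polynomial factor $\ell^{r-1}$ for being merely $r$-normal rather than normal, which is no match for the geometric gap $(1-\varepsilon/\sqrt{k})^{\ell}$ that the hypothesis $\rho\le\sqrt{k}-\varepsilon$ injects into the bound. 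The main bookkeeping obstacles are (i) verifying that $L_{0}^{2}(\mathcal{G}_{\ell})\subseteq L_{0}^{2}(\mathcal{D})$ even if $\mathcal{G}_{\ell}$ happens to be bipartite, and (ii) checking that the cited quantitative Alon--Boppana bound remains valid in the multigraph setting of $\mathcal{G}_{\ell}$; neither is really in doubt.
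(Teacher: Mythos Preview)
Your argument is correct and follows essentially the same route as the paper: symmetrize a high power of $A_{\mathcal D}$, bound it from above via Proposition~\ref{prop:bound-powers}, and from below via the quantitative undirected Alon--Boppana bound. The only differences are cosmetic: the paper symmetrizes via $A^{*\ell}A^{\ell}$ rather than $A^{\ell}+(A^{T})^{\ell}$ (which sidesteps your bipartiteness check, since $A^{*\ell}A^{\ell}$ is positive semidefinite and gives the exact identity $\rho(\mathcal G)=\|A^{\ell}|_{L_0^2}\|^{2}$), and it lets $\ell=\sqrt{\ln(n/2)}$ grow with $n$ to obtain a single quantitative inequality, whereas you fix $\ell$ first and then send $n\to\infty$---either order suffices for the stated qualitative conclusion.
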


\begin{proof}
Let $\mathcal{D}$ be an $r$-normal, aperiodic $k$-regular digraph
on $n$ vertices and denote $\lambda=\rho\left(\mathcal{D}\right)$
and $A=A_{\mathcal{D}}$. Let $\mathcal{G}$ be the graph whose adjacency
matrix is $A^{*\ell}A^{\ell}$, for $\ell\geq r$ which will be determined
later on. Namely, $V_{\mathcal{G}}=V_{\mathcal{D}}$, and each edge
in $\mathcal{G}$ corresponds to a $2\ell$-path in $\mathcal{D}$
whose first $\ell$ steps are in accordance with the directions of
the edges of $\mathcal{D}$, and the next $\ell$ steps are in discordance
with them\footnote{In particular, there are $k^{\ell}$ such closed path consisting of
taking some $\ell$-path and then retracing it backwards, so that
one can even take the graph whose adjacency matrix is $A^{*\ell}A^{\ell}-k^{\ell}I$.}. Since $\mathcal{G}$ is $k^{2\ell}$-regular, (\ref{eq:quant-alon-boppana})
gives 
\[
\rho\left(\mathcal{G}\right)\geq2\sqrt{k^{2\ell}-1}\left(1-\frac{2\pi^{2}}{\log_{k^{2\ell}-1}^{2}\left(n/2\right)}\right).
\]
On the other hand, Proposition \ref{prop:bound-powers} gives 
\[
\rho\left(\mathcal{G}\right)=\rho\left(A^{*\ell}A^{\ell}\big|_{L_{0}^{2}}\right)=\left\Vert A^{\ell}\big|_{L_{0}^{2}}\right\Vert ^{2}\leq{\ell+r-1 \choose r-1}^{2}k^{2r-2}\lambda^{2\left(\ell-r+1\right)},
\]
and together we obtain for some $C_{k,r}>0$ 
\begin{align*}
\lambda^{2\left(\ell-r+1\right)} & \geq\frac{2\sqrt{k^{2\ell}-1}}{{\ell+r-1 \choose r-1}^{2}k^{2r-2}}\left(1-\frac{2\pi^{2}}{\log_{k^{2\ell}-1}^{2}\left(n/2\right)}\right)\geq\frac{C_{k,r}k^{\ell-r+1}}{\ell^{2r-2}}\left(1-\frac{8\left(\pi\ell\ln k\right)^{2}}{\ln^{2}\left(n/2\right)}\right)\\
\Longrightarrow\quad\lambda & \geq\sqrt{k}\cdot\sqrt[2\left(\ell-r+1\right)]{\frac{C_{k,r}}{\ell^{2r-2}}\left(1-\frac{8\left(\pi\ell\ln k\right)^{2}}{\ln^{2}\left(n/2\right)}\right)}.
\end{align*}
We finally choose $\ell=\sqrt{\ln\left(n/2\right)}$, obtaining 
\[
\lambda\geq\sqrt{k}\cdot\sqrt[2\left(\ell-r+1\right)]{\frac{C_{k,r}}{\ell^{2r-2}}\left(1-\frac{8\left(\pi\ln k\right)^{2}}{\ln\left(n/2\right)}\right)}\overset{{\scriptscriptstyle n\rightarrow\infty}}{\longrightarrow}\sqrt{k}.
\]
This concludes the aperiodic case, and we leave the general one to
the reader.
\end{proof}

\section{Further exploration}

\subsection{\label{subsec:Universal-Object}Universal Objects}

The universal cover of all $k$-regular graphs is the $k$-regular
tree $\mathcal{T}_{k}$; Ramanujan graphs are those which, save for
the trivial eigenvalues, confine their spectrum to that of their forefather.
It is possible to give an analogous interpretation for Ramanujan digraphs:
consider the $k$-regular \emph{directed} tree $\mathcal{T}_{k}^{\rightleftharpoons}$,
which is obtained by choosing directions for the edges in $\mathcal{T}_{2k}$
to create a $k$-regular digraph. The spectrum of $\mathcal{T}_{k}^{\rightleftharpoons}$
was computed in \cite{Harpe1993spectrumsumgenerators}:
\begin{equation}
\Spec\left(\mathcal{T}_{k}^{\rightleftharpoons}\right)=\left\{ z\in\mathbb{C}\,\middle|\,\left|z\right|\leq\sqrt{k}\right\} ,\label{eq:spcetrum-directed-tree}
\end{equation}
so indeed a $k$-regular digraph is Ramanujan iff its nontrivial spectrum
is contained in that of its ``universal directed cover'' $\mathcal{T}_{k}^{\rightleftharpoons}$.
However, one can also consider other universal objects: for example,
the line digraph $\mathcal{D}_{L}\left(\mathcal{T}_{k+1}\right)$
of the $k+1$-regular tree is a $k$-regular collision-free digraph
which covers all of the digraphs obtained as line graphs of $\left(k\!+\!1\right)$-regular
graphs (see Figure \ref{fig:The-line-digraph} for $k=2$). Its spectrum
is
\[
\Spec\left(\mathcal{D}_{L}\left(\mathcal{T}_{k+1}\right)\right)=\left\{ \pm1\right\} \cup\left\{ z\in\mathbb{C}\,\middle|\,\left|z\right|=\sqrt{k}\right\} ,
\]
and it contains the spectrum of all Ramanujan digraphs of the form
$\mathcal{D}_{L}\left(\mathcal{G}\right)$. It is also $2$-normal:
$L^{2}\left(V_{\mathcal{D}_{L}\left(\mathcal{T}_{k+1}\right)}\right)$
decomposes as an orthogonal direct integral of one and two-dimensional
spaces, each stable under the adjacency operator. Similarly, the digraph
which describes the geodesic walk on the two-dimensional buildings
of type $\widetilde{A}_{2}$ is $3$-normal, and by computations in
\cite{kang2010zeta} its spectrum is 
\begin{equation}
\Spec\left(\mathcal{D}_{W}\left(\widetilde{A}_{2}\right)\right)=\left\{ z\in\mathbb{C}\,\middle|\,\left|z\right|=\sqrt[4]{k}\right\} \cup\left\{ z\in\mathbb{C}\,\middle|\,\left|z\right|=\sqrt{k}\right\} \label{eq:PGL3-spec}
\end{equation}
(see Figure \ref{fig:Cayley_digraph_spec} (right) for a Ramanujan
quotient of this digraph). One can continue to higher dimensions in
this manner - see \cite{kang2016riemann,Lubetzky2017RandomWalks}
for more details.

\subsection{\label{subsec:Universal-Cayley-graphs}Universal Cayley graphs}

For even $k$, the $k$-regular tree $\mathcal{T}_{k}$ is the Cayley
graph of $\mathbf{F}_{k/2}$, the free group on $S=\left\{ x_{1},\ldots,x_{k/2}\right\} $,
with respect to the generating set $S\cup S^{-1}=\left\{ x_{1},\ldots,x_{k/2},x_{1}^{-1},\ldots,x_{k/2}^{-1}\right\} $
(see Figure \ref{fig:The-directed-tree}). In fact, for any subset
$S$ of size $k/2$ in a group $G$, the following are tautologically
equivalent:
\begin{enumerate}
\item $G$ is a free group and $S$ is a free generating set.
\item The Cayley graph $Cay\left(G,S\sqcup S^{-1}\right)$ is a tree\footnote{Here $\sqcup$ indicates disjoint union, so that this is always a
$k$-regular graph.}.
\end{enumerate}
The following, however, is far from a tautology:
\begin{thm}[\cite{Kesten1959}]
For $\frac{k}{2}>1$, $\left(1\right)$ and $\left(2\right)$ above
are equivalent to:
\begin{enumerate}
\item[$(3)$] $\rho\left(A_{Cay\left(G,S\sqcup S^{-1}\right)}\right)=2\sqrt{k-1}$.
\end{enumerate}
\end{thm}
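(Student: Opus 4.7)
The equivalence $(1) \Leftrightarrow (2)$ is immediate: $S$ being a free generating set says exactly that no reduced word in $S \sqcup S^{-1}$ is trivial in $G$, which in turn says that every non-backtracking closed walk at $e$ in $Cay(G, S\sqcup S^{-1})$ has length zero, i.e.\ the graph is a tree. The substance of the theorem lies in $(2) \Leftrightarrow (3)$.

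For $(2) \Rightarrow (3)$, assuming $Cay(G, S \sqcup S^{-1}) = \mathcal{T}_k$, one invokes the classical computation (already cited in the paper from Kesten) that $\rho(\mathcal{T}_k) = 2\sqrt{k-1}$. A quick self-contained route: since $\mathcal{T}_k$ is vertex-transitive and $A$ is self-adjoint, $\rho(A_{\mathcal{T}_k}) = \lim_{n} (A_{\mathcal{T}_k}^{2n})_{o,o}^{1/(2n)}$, and the first-return decomposition of closed walks at the root $o$ produces an explicit algebraic generating function whose smallest singularity lies at $z = 1/(2\sqrt{k-1})$.

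For $(3) \Rightarrow (2)$ I argue the contrapositive. Write $T = S \sqcup S^{-1}$ and present $G$ as $F_{k/2}/N$, so the covering $\mathcal{T}_k \twoheadrightarrow Cay(G,T)$ corresponds to a normal subgroup $N \trianglelefteq F_{k/2}$. The diagonal entry $(A_{Cay(G,T)}^{2n})_{e,e}$ counts $T$-words of length $2n$ whose free reduction lies in $N$, whereas $(A_{\mathcal{T}_k}^{2n})_{o,o}$ counts only those freely reducing to the identity. Since $\{e\} \subseteq N$,
\[(A_{Cay(G,T)}^{2n})_{e,e} \;\geq\; (A_{\mathcal{T}_k}^{2n})_{o,o},\]
and the vertex-transitivity principle invoked above then gives $\rho(A_{Cay(G,T)}) \geq 2\sqrt{k-1}$ unconditionally. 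If the Cayley graph is not a tree then $N$ is a non-trivial normal subgroup of the non-cyclic free group $F_{k/2}$, and I would invoke the Grigorchuk--Cohen cogrowth formula, which identifies $\rho(A_{Cay(G,T)})$ as a strictly monotone function of the exponential cogrowth rate of $N$ in $F_{k/2}$, attaining $2\sqrt{k-1}$ only when $N = \{e\}$.

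The main obstacle is this strictness step: producing a genuine gap rather than merely a non-strict inequality in $\rho$. The coordinatewise domination above is routine, but strictness encodes the non-trivial fact that any non-trivial normal subgroup of a non-cyclic free group has positive exponential cogrowth. The cogrowth formula bundles this cleanly at the cost of some machinery; a direct combinatorial route would quantify the surplus of closed walks contributed by a shortest relation $w \in N$ and show that it lifts the asymptotic growth rate of $(A^{2n})_{e,e}$ strictly above $(2\sqrt{k-1})^{2n}$, which essentially re-derives the cogrowth identity.
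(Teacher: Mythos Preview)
The paper does not give a proof of this theorem: it is stated with attribution to Kesten's 1959 paper and no argument follows. So there is nothing in the paper to compare your attempt against.

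On the merits of your sketch: the easy directions are fine, and you correctly isolate the strictness in $(3)\Rightarrow(2)$ as the real content. Your invocation of the Grigorchuk--Cohen cogrowth formula is a legitimate (though anachronistic, circa 1980) route, but your description of it is slightly off. The formula is \emph{not} strictly monotone in the cogrowth $\alpha$ on its whole range: it is constant, equal to $2\sqrt{k-1}$, for $\alpha\le\sqrt{k-1}$ and strictly increasing only for $\alpha\ge\sqrt{k-1}$. Thus what you actually need is that every non-trivial normal subgroup $N\trianglelefteq \mathbf{F}_{k/2}$ has cogrowth strictly exceeding $\sqrt{k-1}$, which is stronger than ``positive exponential cogrowth'' (i.e.\ $\alpha>1$). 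That strict inequality is true and is part of the Grigorchuk--Cohen circle of results, but be aware that it is essentially equivalent to Kesten's theorem itself, so citing it as a black box is close to assuming what you want to prove. Kesten's original 1959 argument is direct and predates cogrowth: a single non-trivial relation is shown to force a strict increase in the exponential growth rate of return probabilities, which is roughly the ``direct combinatorial route'' you describe in your last paragraph.
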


This does not say that $\mathcal{T}_{k}$ is the only $k$-regular
graph with spectral radius $2\sqrt{k-1}$, but rather that among Cayley
graphs it is the only one. In a sense, Keten's result says that the
Ramanujan spectrum characterizes the free group. The analogue for
directed graphs was revealed to be more complex in \cite{Harpe1993spectrumsumgenerators}.
First, observe that $\mathcal{T}_{k}^{\rightleftharpoons}$ is the
Cayley digraph of the free group with respect to the \emph{positive
}generating letters: 
\[
\mathcal{T}_{k}^{\rightleftharpoons}=Cay\left(\mathbf{F}_{k},\left\{ x_{1},\ldots,x_{k}\right\} \right).
\]

\begin{figure}[h]
\begin{centering}
\includegraphics[width=0.6\columnwidth]{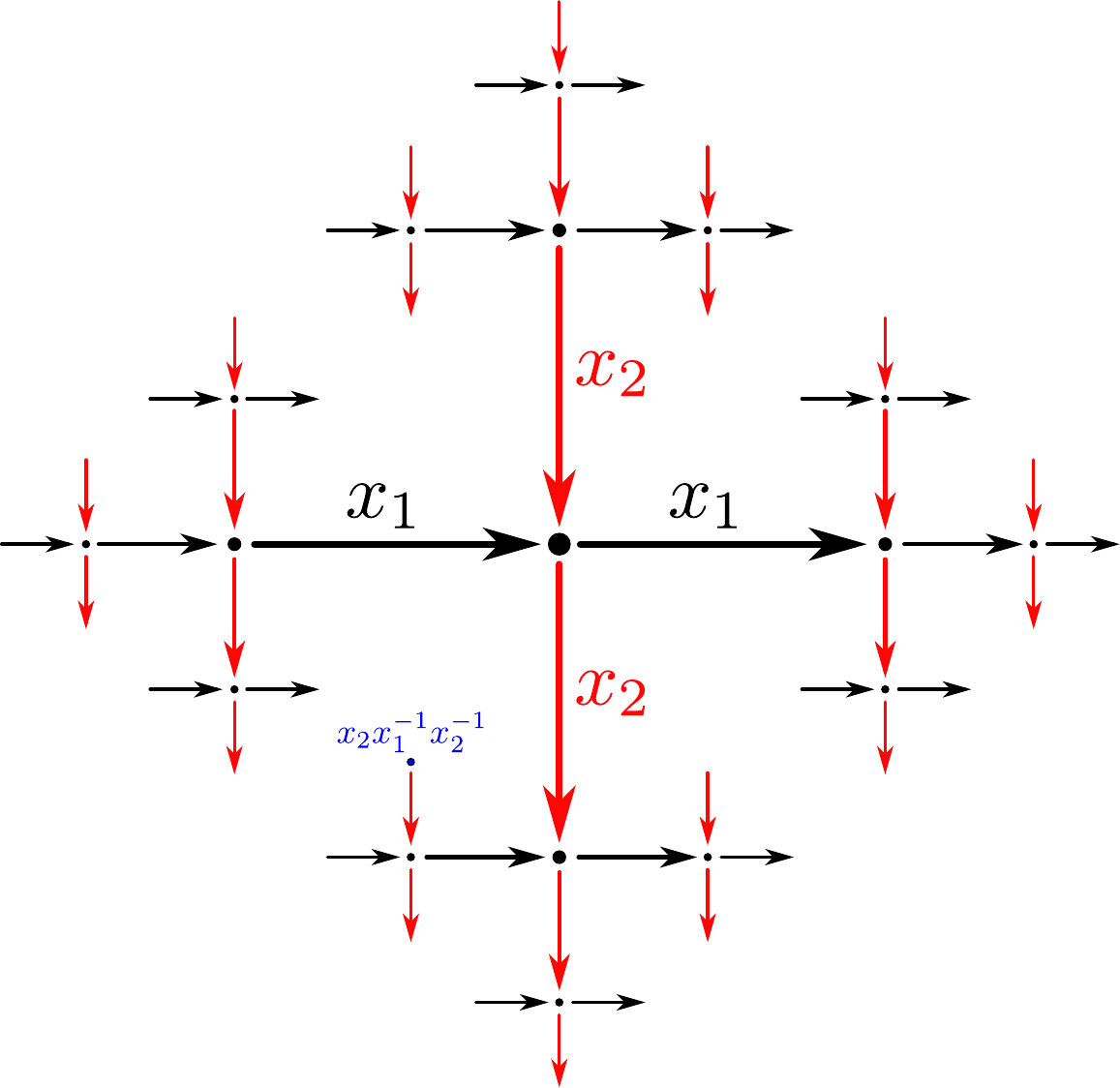}
\par\end{centering}
\caption{\label{fig:The-directed-tree}The directed tree $\mathcal{T}_{2}^{\rightleftharpoons}$
as $Cay\left(\mathbf{F}_{2},\left\{ x_{1},x_{2}\right\} \right)$.}
\end{figure}

As we have said, the spectral radius of $\mathcal{T}_{k}^{\rightleftharpoons}$
is $\sqrt{k}$, but it turns out that it is enough that $S$ generate
a free \emph{semigroup }in order for this to happen:
\begin{thm}[\cite{Harpe1993spectrumsumgenerators}]
\label{thm:Kesten-dlH}Let $S$ be a subset of size $k\geq2$ in
a group $G$. If $S$ generates a free subsemigroup of $G$, then
\[
\rho\left(A_{Cay\left(G,S\right)}\right)=\sqrt{k},
\]
and if $G$ has property (RD)\footnote{Property (RD), which stands for rapid decay, is satisfied both by
hyperbolic groups and by groups of polynomial growth. For its definition
we refer the reader to \cite{jolissaint1990rapidly,Harpe1993spectrumsumgenerators}. } then the converse holds as well.
\end{thm}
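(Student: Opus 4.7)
I would prove the two implications separately, working throughout with $B=\sum_{s\in S}\lambda(s)\in C^*_r(G)$; since $A$ and $B^*$ are unitarily equivalent via the flip $\delta_g\mapsto\delta_{g^{-1}}$, one has $\rho(A)=\rho(B)$.

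For the \textbf{forward direction} (free subsemigroup $\Rightarrow\rho(B)=\sqrt k$), the lower bound is immediate: freeness makes the $k^n$ length-$n$ products pairwise distinct in $G$, so $B^n\delta_e$ is a sum of $k^n$ orthogonal unit masses, $\|B^n\delta_e\|_2^2=k^n$, and Gelfand's formula together with $\|B^n\|_{op}\geq\|B^n\delta_e\|_2$ yields $\rho(B)\geq\sqrt k$. For the upper bound, the subspace $H=\ell^2(\langle S\rangle_+\cup\{e\})$ is $B$-invariant, and freeness forces $s^{-1}t\notin\langle S\rangle_+\cup\{e\}$ for $s\neq t$ in $S$ (else $t=s(s^{-1}t)$ would be a length-$\geq 2$ semigroup factorization of $t\in S$), whence the Cuntz--Toeplitz relations $L_s^*L_t=\delta_{st}I_H$ hold for the restrictions $L_s:=\lambda(s)|_H$. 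Therefore $(B|_H)^*(B|_H)=kI_H$, so $B|_H$ is $\sqrt k$ times an isometry and $\rho(B|_H)=\sqrt k$. To extend this to $\ell^2(G)$, I would use the ``negative depth'' filtration $F_n=\ell^2(\{g:\mathrm{depth}(g)\leq n\})$, where $\mathrm{depth}(g)$ is the minimum number of $S^{-1}$-letters in any $(S\cup S^{-1})$-expression of $g$. Left multiplication by $s\in S$ never raises depth, so $B$ is block upper-triangular with respect to $\ell^2(G)=\bigoplus_n\ell^2(L_n)$ (where $L_n=\{g:\mathrm{depth}(g)=n\}$), giving $\mathrm{Spec}(B)=\overline{\bigcup_n\mathrm{Spec}(\bar B_n)}$ for the diagonal blocks $\bar B_n=P_{L_n}BP_{L_n}$. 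Each $\bar B_n$ inherits an analogous Cuntz--Toeplitz structure on its ``positively-accessible'' subspace (obtained by splitting $L_n$ according to the sign of the first letter in a depth-minimizing expression), yielding $\rho(\bar B_n)\leq\sqrt k$ and hence $\rho(B)\leq\sqrt k$.

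For the \textbf{converse}, suppose $\rho(B)=\sqrt k$ but $S$ does not generate a free subsemigroup, so $p_{n_0}:=|\pi(S^{n_0})|\leq k^{n_0}-1$ for some $n_0$. Submultiplicativity yields $p_{mn_0}\leq(k^{n_0}-1)^m$, and Cauchy--Schwarz on the multiplicity function $m_g:=|\pi^{-1}(g)\cap S^{mn_0}|$ gives $\|B^{mn_0}\delta_e\|_2^2=\sum_g m_g^2\geq k^{2mn_0}/p_{mn_0}$. Hence
\[\rho(B)\;\geq\;\limsup_m \|B^{mn_0}\delta_e\|_2^{1/(mn_0)}\;\geq\;\frac{k}{(k^{n_0}-1)^{1/(2n_0)}}\;>\;\sqrt k,\]
contradicting $\rho(B)=\sqrt k$. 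Property (RD) enters the original Harpe--Robertson--Valette argument via Haagerup's inequality $\|\phi\|_{C^*_r}\leq C(1+n)^s\|\phi\|_2$ for $\phi$ supported on the $n$-sphere, the standard device for converting $\ell^2$-walk-count estimates into $C^*_r$-operator-norm estimates; the multiplicity argument above shows (RD) is not strictly required for this direction.

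The \textbf{main obstacle} is the forward upper bound, because $B$ is highly non-normal---already for $G=\mathbf F_k$ itself one has $\|B\|=2\sqrt{k-1}>\sqrt k=\rho(B)$ by Akemann--Ostrand, so no raw operator-norm bound is sharp. Extracting $\rho(B)$ requires the depth-filtration analysis, and verifying that the Cuntz--Toeplitz structure on each graded piece $L_n$ persists for arbitrary groups $G$ containing $S$ as a free subsemigroup (where unexpected group-theoretic ``depth drops'' may arise beyond the tree-like ones of the free group) is the genuinely delicate step.
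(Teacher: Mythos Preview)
The paper does not prove this theorem at all; it is quoted from de la Harpe--Robertson--Valette, so there is no in-paper argument to compare against.  I therefore assess your proposal on its own.

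Your lower bound in the forward direction and your converse argument are both fine.  In fact your Cauchy--Schwarz multiplicity count is exactly the elementary mechanism behind the converse, and you are right that it does not invoke property~(RD).

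The genuine gap is in your forward \emph{upper} bound $\rho(B)\le\sqrt k$.  The depth filtration makes $B$ block upper-triangular with respect to $\bigoplus_n\ell^2(L_n)$, but the assertion
\[
\mathrm{Spec}(B)=\overline{\textstyle\bigcup_n\mathrm{Spec}(\bar B_n)}
\]
is \emph{false} for infinite block-triangular operators.  The unilateral shift on $\ell^2(\mathbb N)$ is strictly triangular with all $1\times1$ diagonal blocks equal to~$0$, yet its spectrum is the whole closed unit disk.  Your operator is in fact block upper \emph{bidiagonal} (since $\mathrm{depth}(sg)\in\{\mathrm{depth}(g)-1,\mathrm{depth}(g)\}$), which is exactly the shift-like structure that breaks the diagonal-block heuristic.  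Consequently, even granting $\rho(\bar B_n)\le\sqrt k$ for every $n$ (which you only sketch), nothing prevents $\rho(B)>\sqrt k$ via the off-diagonal ``depth-dropping'' pieces.  The Cuntz--Toeplitz relation you establish on $H=\ell^2(\langle S\rangle_+\cup\{e\})$ gives $\|B|_H\|=\sqrt k$, hence $\rho(B|_H)\le\sqrt k$, but this controls only the $L_0$ block.

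This also means you have the role of (RD) backwards.  In the de~la~Harpe--Robertson--Valette argument, Haagerup's inequality is what converts the $\ell^2$-count $\|B^n\delta_e\|_2=k^{n/2}$ (which uses only freeness of the semigroup) into the operator-norm bound $\|B^n\|\le C(1+n)^d k^{n/2}$, and Gelfand then yields $\rho(B)\le\sqrt k$.  So (RD) is precisely the ingredient that handles the obstacle you correctly flag at the end---the non-normality of $B$---and it enters in the \emph{forward} upper bound, not in the converse.
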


For example, small cancellation theory shows that in the surface group
of genus $g\geq2$
\[
S_{g}=\left\langle a_{1},b_{1},\ldots,a_{g},b_{g}\,\middle|\,\left[a_{1},b_{1}\right]\cdot\ldots\cdot\left[a_{g},b_{g}\right]\right\rangle ,
\]
the elements $\left\{ a_{1},b_{1},\ldots,a_{g},b_{g}\right\} $ generate
a free semigroup. Thus, the corresponding Cayley digraph of $S_{g}$
has spectral radius $\sqrt{k}$ even though $S_{g}$ is not free.

\subsection{\label{subsec:Explicit-constructions}Explicit constructions}

For any $k$, \cite{marcus2013interlacing} shows the existence of
infinitely many $k$-regular bipartite Ramanujan graphs, and thus
there exist infinitely many $k$-regular, $2$-periodic, $2$-normal
Ramanujan digraphs, namely their line-digraphs defined in §\ref{subsec:Directed-line-graphs}.
For any prime power $k$, \cite{LPS88,morgenstern1994existence} give
both aperiodic and $2$-periodic $k$-regular Ramanujan digraphs,
as line digraphs of explicit Cayley graphs.

Nevertheless, it is interesting to ask whether Ramanujan digraphs
can be obtained as Cayley digraphs in themselves, and also which groups
$G$ has a generating set $S$ such that $Cay\left(G,S\right)$ is
an almost-normal Ramanujan digraph, as this gives the extremal results
on random walk and diameter mentioned after the statement of Proposition
\ref{prop:bound-powers}.

For $k\in\left\{ 2,3,4,5,7,11,23,59\right\} $, an infinite family
of $k$-regular, $2$-normal Ramanujan digraphs is constructed in
\cite[§5.2]{Parzanchevski2018SuperGoldenGates} as Cayley digraphs
of $PSL_{2}\left(\mathbb{F}_{q}\right)$ and $PGL_{2}\left(\mathbb{F}_{q}\right)$.
Each such family arises from a special arithmetic lattice in the projective
unitary group $PU\left(2\right)$, which acts simply transitively
on the directed edges of the tree $\mathcal{T}_{k+1}$, and whose
torsion subgroup is a group of symmetries of a platonic solid. An
example with $k=4$ is shown in Figure \ref{fig:Cayley_digraph_spec}.

In \cite{Parzanchevski2018Optimalgeneratorsmatrix} we go much further,
showing that for any prime power $q$ and any $d\geq2$ there is an
explicit family of Cayley Ramanujan digraphs on $PSL_{d}\left(\mathbb{F}_{q^{\ell}}\right)$
and $PGL_{d}\left(\mathbb{F}_{q^{\ell}}\right)$ ($\ell\rightarrow\infty$),
which are $k=q^{d-1}$-regular and sharply $d$-normal. As explained
in section §\ref{sec:almost-normal}, this implies that they have
sharp $L^{1}$-cutoff at time $\log_{k}n$, and that their diameter
is bounded by $\left(2+o\left(1\right)\right)\log_{k}\left(n\right)$.
This is quite different from the symmetric case: we have no reason
to suspect that $PSL_{d}\left(\mathbb{F}_{q^{\ell}}\right)$ can be
endowed with a structure of a Ramanujan Cayley graph, for $d\geq3$.
Let us sketch the main ideas: In \cite{cartwright1998family,Lubotzky2005b}
appears an arithmetic lattice $\Gamma$ in a certain division algebra,
which acts simply-transitively on the vertices of the building of
type $\widetilde{A}_{d-1}$ associated with the group $PSL_{d}\left(\mathbb{F}_{q}\left(\left(t\right)\right)\right)$.
This lattice can be enlarged to a lattice $\Gamma<\Gamma'$, which
acts simply-transitively on the edges of color 1 in the same building.
Recall from §\ref{subsec:Collision-free-walks-on} that the geodesic
walk on these edges is $k$-regular and collision-free. We take a
set of generators $S\subseteq\Gamma'$ which induces this walk, and
regard them as elements in the finite group $PSL_{d}\left(\mathbb{F}_{q^{\ell}}\right)$,
which is obtained as a congruence quotient of $\Gamma'$ via strong
approximation. We then invoke the Jacquet\textendash Langlands correspondence
of \cite{badulescu2017global} and the Ramanujan conjecture for function
fields \cite{lafforgue2002chtoucas} to deduce that the nontrivial
spectrum of $S$ on the finite quotient group is contained in the
spectrum of $S$ acting on the building, thus obtaining a Ramanujan
digraph. Finally, sharp $d$-normality follows from \cite[Prop.\ 5.3, 5.4]{Lubetzky2017RandomWalks}.
An example with $d=3$ is shown in Figure \ref{fig:Cayley_digraph_spec},
agreeing with the spectrum of geodesic walk on $\widetilde{A}_{2}$
building shown in (\ref{eq:PGL3-spec}). \newsavebox{\smltwo} \savebox{\smltwo}{$\left\{ \left(\begin{smallmatrix}28 & 4\\
12 & 4
\end{smallmatrix}\right),\left(\begin{smallmatrix}15 & 13\\
10 & 15
\end{smallmatrix}\right),\left(\begin{smallmatrix}6 & 18\\
18 & 13
\end{smallmatrix}\right),\left(\begin{smallmatrix}7 & 3\\
11 & 5
\end{smallmatrix}\right)\right\} $} \newsavebox{\smlthree} \savebox{\smlthree}{$\vphantom{\left(\begin{smallmatrix}\big|\\
0\\
x
\end{smallmatrix}\right)}\left\{ \left(\begin{smallmatrix}0 & 0 & 1\\
0 & x & 0\\
x & x+1 & x
\end{smallmatrix}\right),\left(\begin{smallmatrix}1 & 1 & 1\\
x+1 & x+1 & 1\\
x & 0 & x
\end{smallmatrix}\right),\left(\begin{smallmatrix}1 & 1 & 0\\
x+1 & 1 & 1\\
0 & x+1 & 0
\end{smallmatrix}\right),\left(\begin{smallmatrix}0 & 1 & x\\
x & x+1 & 0\\
1 & x & x
\end{smallmatrix}\right)\right\} $}
\begin{figure}[h]
\begin{centering}
\includegraphics[width=0.99\columnwidth]{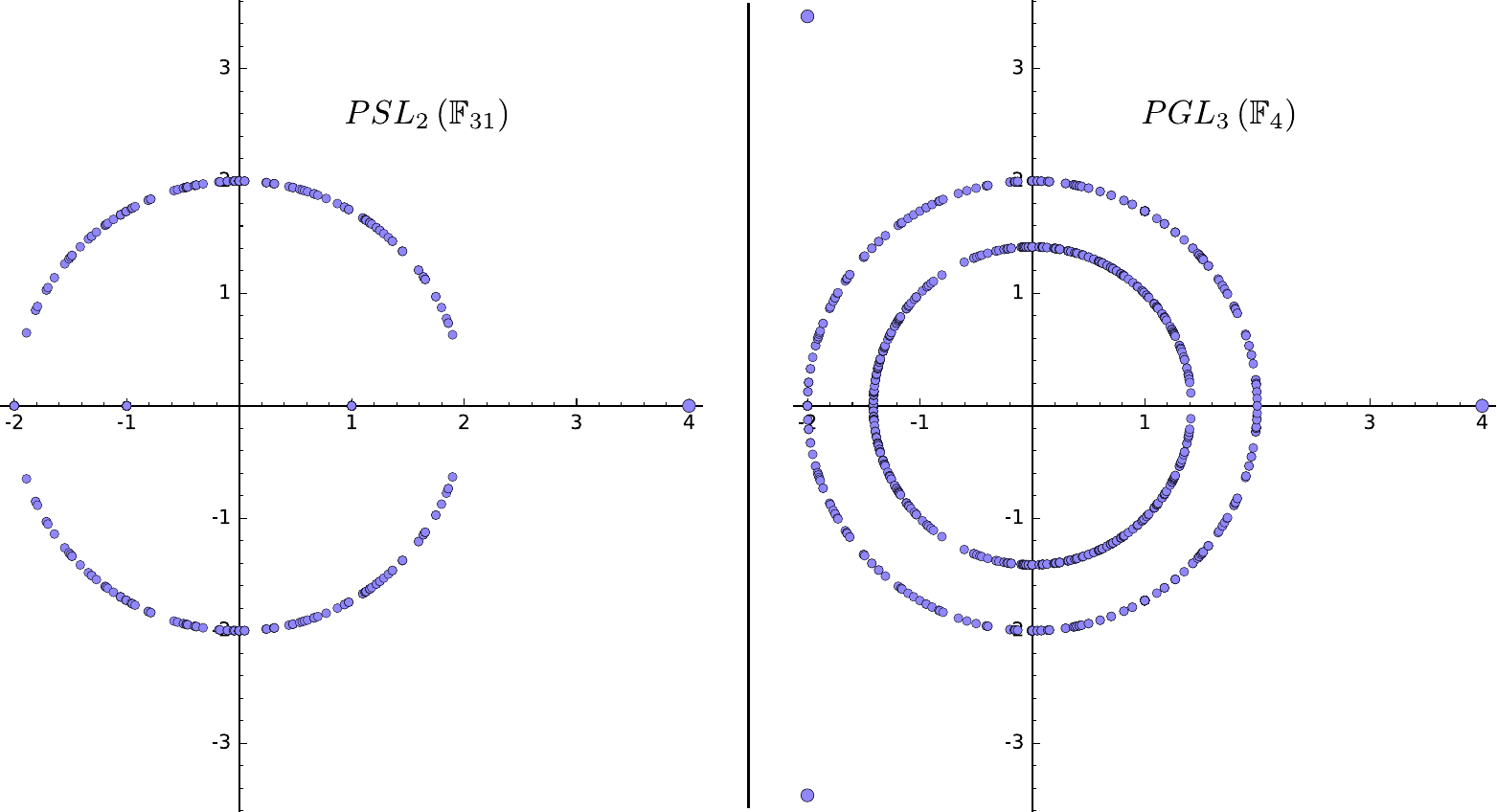} 
\par\end{centering}
\caption{\label{fig:Cayley_digraph_spec}Examples of spectra of Ramanujan Cayley
digraphs: $PSL_{2}\left(\mathbb{F}_{31}\right)$ with generators \usebox{\smltwo}
from \cite{Parzanchevski2018SuperGoldenGates}, and $PGL_{3}\left(\mathbb{F}_{4}\right)$
with generators \usebox{\smlthree} from \cite{Parzanchevski2018Optimalgeneratorsmatrix}.}
\end{figure}

\subsection{\label{subsec:Riemann-Hypothesis}Riemann Hypothesis}

We briefly mention the perspective of zeta functions - for a lengthier
discussion see \cite[§6]{Lubetzky2017RandomWalks} and \cite{kang2010zeta,kang2016riemann,kamber2016lp}.
Ihara \cite{ihara1966discrete} associated with a graph $\mathcal{G}$
a zeta function $\zeta_{\mathcal{G}}\left(u\right)$ which counts
closed cycles in $\mathcal{G}$, in analogy with the Selberg zeta
function of a hyperbolic surface. If $\mathcal{G}$ is \textbf{$\boldsymbol{\left(k\!+\!1\right)}$}-regular,
it is Ramanujan if and only if $\zeta_{\mathcal{G}}\left(u\right)$
satisfies the following ``Riemann hypothesis'': every pole at $\zeta_{\mathcal{G}}\left(k^{-s}\right)$
with $0<\Re s<1$ satisfies $\Re s=\frac{1}{2}$. Indeed, Hashimoto
\cite{hashimoto1989zeta} proved that $\zeta_{\mathcal{G}}\left(u\right)=\det\left(I-u\cdot A_{\mathcal{D}_{L}\left(\mathcal{G}\right)}\right)^{-1}$,
so that (\ref{eq:spec-line-dig}) and (\ref{eq:ram-to-dig-ram}) show
this equivalence (note that the trivial eigenvalues $\pm k$ of $\mathcal{D}_{L}\left(\mathcal{G}\right)$
and the eigenvalues $\pm1$ in (\ref{eq:spec-line-dig}) correspond
to $s=1$ and $s=0$, respectively). For digraphs the story is simpler:
the zeta function $Z_{\mathcal{D}}\left(u\right)$ of a digraph $\mathcal{D}$
(following \cite{bowen1970zeta,hashimoto1989zeta,kotani2000zeta})
is $Z_{\mathcal{D}}\left(u\right)=\prod_{\left[\gamma\right]}\left(1-u^{\ell\left(\gamma\right)}\right)^{-1}$,
where $\gamma$ is a primitive directed cycle of length $\ell\left(\gamma\right)$
in $\mathcal{D}$, and $\left[\gamma\right]$ is the equivalence class
of its cyclic rotations. One then has $Z_{\mathcal{D}}\left(u\right)=\det\left(I-u\cdot A_{\mathcal{D}}\right)^{-1}$,
so that by (\ref{eq:ram-digraph}) a $k$-regular digraph $\mathcal{D}$
is Ramanujan if and only if every pole at $Z_{\mathcal{D}}\left(k^{-s}\right)$
satisfies $\Re s=1$ or $0\leq\Re s\leq\frac{1}{2}$. The fact that
we cannot rule out $s$ with $0<\Re s<\frac{1}{2}$ is demonstrated
by (\ref{eq:PGL3-spec}), for example.

\subsection{\label{subsec:Alon-conjecture}Alon's conjecture}

One of the earliest results on graph expansion is that random regular
graphs are expanders \cite{kolmogorov1967realization,pinsker1973complexity}.
In \cite{Alo86}, Alon conjectured that they are in fact almost Ramanujan.
Namely, for any $\varepsilon>0$ 
\begin{equation}
\mathrm{Prob}\left[\rho\left(\mathcal{G}\right)\leq2\sqrt{k-1}+\varepsilon\right]\overset{{\scriptscriptstyle n\rightarrow\infty}}{\longrightarrow}1\qquad\left(\begin{smallmatrix}\text{where \ensuremath{\mathcal{G}} is a random}\\
\text{\ensuremath{k}-regular graph on \ensuremath{n} vertices}
\end{smallmatrix}\right).
\end{equation}
Alon's conjecture was eventually proved by Friedman \cite{Friedman2008},
and other proofs followed \cite{friedman2014relativized,bordenave2015new}.
While working on the paper \cite{Lubetzky2017RandomWalks}, the author
conjectured that random regular digraphs are almost Ramanujan as well,
in the sense that 
\begin{equation}
\mathrm{Prob}\left[\rho\left(\mathcal{D}\right)\leq\sqrt{k}+\varepsilon\right]\overset{{\scriptscriptstyle n\rightarrow\infty}}{\longrightarrow}1\qquad\left(\begin{smallmatrix}\text{where \ensuremath{\mathcal{D}} is a random}\\
\text{\ensuremath{k}-regular digraph on \ensuremath{n} vertices}
\end{smallmatrix}\right),\label{eq:alon-conjecture-digraphs}
\end{equation}
for any $\varepsilon>0$; and furthermore, that they behave as almost-normal
digraphs, in the sense that the operator norm of their powers is well
behaved as in Proposition \ref{prop:bound-powers}. In joint work
with Doron Puder we tried to extend the methods from \cite{puder2012expansion}
to prove this conjecture, and made partial progress which is described
below. This project was disrupted by the appearance of a solution
on the arXiv:
\begin{thm}[{\cite[Thm.\ 1 with $\delta=\Delta=k$]{coste2017spectral}}]
Statement (\ref{eq:alon-conjecture-digraphs}) is true.
\end{thm}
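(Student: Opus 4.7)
The plan is to carry out a high-moment trace method in the directed configuration model, adapting the tangle-free framework that Bordenave developed for Friedman's theorem in the undirected setting. I would generate the random digraph by attaching $k$ out-half-edges and $k$ in-half-edges to each of the $n$ vertices and taking a uniformly random perfect matching between the two sets. Let $A$ denote the resulting adjacency matrix, let $\Pi$ be the orthogonal projection onto $L_{0}^{2}=\one^{\bot}$ (using that the random digraph is aperiodic with high probability), and write $M=\Pi A\Pi$. Since $\rho(M)\leq\|M\|$ and $\|M\|^{2\ell}\leq\tr\bigl((M^{*}M)^{\ell}\bigr)$, Markov's inequality gives
\[
\Prob\bigl[\rho(\mathcal{D})\geq\sqrt{k}+\varepsilon\bigr]\;\leq\;\bigl((\sqrt{k}+\varepsilon)^{2\ell}\bigr)^{-p}\,E\!\left[\tr\bigl((M^{*}M)^{\ell}\bigr)^{p}\right]
\]
for any $p\geq 1$, so the objective is to control the expectation on the right for suitable $\ell\sim c_{1}\log n$ and $p\sim c_{2}\log n/\log\log n$.

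Next, I would expand $\tr((A^{*}A)^{\ell})^{p}$ as a sum over $p$-tuples of closed walks of length $2\ell$ in which forward and reverse steps alternate. In the configuration model, the probability that a prescribed sequence of half-edge pairings is realised is $\prod(kn-j)^{-1}$, depending only on the number of distinct edges used, so grouping walks by the isomorphism type of the directed multigraph $H$ they span yields an expectation that factorises into a sum over shapes weighted by $n^{v(H)-e(H)}$ and a combinatorial multiplicity. The dominant shapes, those with $e(H)=v(H)$, correspond to walks whose underlying multigraph is essentially a tree with at most one decoration.

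The crux, and the main expected obstacle, is then the standard (but delicate) split into \emph{tangle-free} walks and \emph{tangled} walks. For tangle-free walks, a directed Kesten--McKay style recursion leads to a generating function whose dominant singularity gives the main term $n\cdot k^{\ell p}(1+o(1))$; this exactly reflects the fact that the universal directed cover $\mathcal{T}_{k}^{\rightleftharpoons}$ of Section \ref{subsec:Universal-Object} has spectrum of radius $\sqrt{k}$, and converts into $\rho\leq\sqrt{k}(1+o(1))$. Tangled walks are controlled by combining the deterministic bound $\|A\|\leq k$ (applied to the short tangled piece) with the fact that any fixed tangle appears in the configuration model with probability $O(1/n)$, union-bounded over the polynomially many tangle shapes arising at our scales. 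The higher moment $p$ is essential here: raising to the $p$-th power amplifies the concentration of the tangle-free behaviour while allowing Markov to absorb the tangled contribution. The trivial eigenvalue $k$ is removed by subtracting a rank-one perturbation $\tfrac{k}{n}\one\one^{*}$ at the level of walks, replacing the ``closed'' constraint by a mean-centered one.

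Finally, the conclusion is transferred from the configuration model to the uniform random $k$-regular digraph by standard contiguity: after conditioning on the outcome being simple (an event of probability bounded away from zero), the Radon--Nikodym derivative between the two laws is $O(1)$, so probability bounds carry over. I expect the technically hardest part to be the combinatorial enumeration of the tangle-free class in the directed setting, where the alternating forward/reverse orientation pattern produces strictly more walk types than in the undirected case of Bordenave, and consequently a richer recursion to solve.
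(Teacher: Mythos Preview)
The paper does not prove this theorem at all: it is simply quoted from Coste, and what follows in \S\ref{subsec:Alon-conjecture} is a \emph{sketch of a different, unfinished} approach (via Puder's primitivity-rank counting). Your outline is much closer to the cited Bordenave--Coste argument than to the paper's sketch, so at that level you are aiming at the right proof.

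There is, however, a genuine gap in your setup. You bound $\rho(M)\le\|M\|$ and then control $\|M\|$ through $\tr\bigl((M^{*}M)^{\ell}\bigr)$, whose expansion is indeed over walks ``in which forward and reverse steps alternate'' one step at a time. But for a random $k$-regular digraph the operator norm $\|A|_{L_0^2}\|$ is \emph{not} $\sqrt{k}$: the bipartite double cover with adjacency $\left(\begin{smallmatrix}0 & A\\ A^{*} & 0\end{smallmatrix}\right)$ is a uniformly random $k$-regular bipartite graph, so by Friedman's theorem its second eigenvalue---equivalently the second singular value of $A$---is $2\sqrt{k-1}+o(1)$. Since $2\sqrt{k-1}>\sqrt{k}$ for every $k\ge 2$, your trace can at best yield $\rho(\mathcal D)\le 2\sqrt{k-1}+\varepsilon$, which misses the target. (Your claimed main term $n\cdot k^{\ell p}$ is in fact the answer for a different quantity, see below; for $\tr((A^{*}A)^{\ell})$ on the universal cover the growth is $(4(k-1))^{\ell}$, not $k^{\ell}$.)

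The correction---and this is exactly what both Coste and the paper's own sketch do---is to go through Gelfand's formula $\rho(A_0)=\lim_{\ell}\|A_0^{\ell}\|^{1/\ell}$ and study $\tr\bigl((A^{*\ell}A^{\ell})^{t}\bigr)$ for $\ell\to\infty$: walks consisting of a block of $\ell$ forward steps, then $\ell$ backward steps, repeated $t$ times (the paper's ``$\ell$-alternating words''). On $\mathcal{T}_{k}^{\rightleftharpoons}$ there are exactly $k^{\ell}$ such closed walks of length $2\ell$ from a fixed vertex, which is what produces the main term $k^{\ell t}$ and hence the bound $\sqrt{k}$. Your tangle-free/tangled dichotomy and contiguity step are fine once this is in place; the error is purely in which power of $A$ you take before symmetrising.
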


Since our methods are quite different from the ones in \cite{coste2017spectral},
and might lead to other results (such as understanding of the adjacency-powers),
we sketch them here.

In the seminal paper \cite{broder1987second} the value of $\rho\left(\mathcal{G}\right)$
for a random $k$-regular graph on $n$ vertices is bounded in the
following manner: for even $\ell$, $\rho\left(\mathcal{G}\right)^{\ell}\leq\tr\left(A^{\ell}\right)-k^{\ell}$,
and $\tr\left(A^{\ell}\right)$ equals the number of closed paths
of length $\ell$ in $\mathcal{G}$. In the permutation model for
$\mathcal{G}$ (see \cite{broder1987second,wormald1999models,puder2012expansion})
each path of length $\ell$ is determined by a starting vertex, and
a word $\omega$ of length $\ell$ in $S=\left\{ x_{1}^{\pm1},\ldots,x_{k/2}^{\pm1}\right\} $.
If $\omega$ is trivial as an element of $\mathbf{F}_{k/2}$, this
path is completely backtracking in every instance of $\mathcal{G}$,
and in particular closed. Denoting $p_{\omega}=\mathrm{Prob}\left({\text{a path in \ensuremath{\mathcal{G}} which starts at \ensuremath{v}}\atop \text{ and is labeled by \ensuremath{\omega} ends at \ensuremath{v}}}\right)-\frac{1}{n}$,
one obtains 
\[
\mathbb{E}\left(\rho\left(\mathcal{G}\right)^{\ell}\right)\leq\mathbb{E}\left(\tr\left(A^{\ell}\right)\right)-k^{\ell}=n\sum_{\omega\in S^{\ell}}p_{\omega},
\]
and each trivial $\omega$ contributes $p_{\omega}=1-\frac{1}{n}$.
In \cite{broder1987second} it is shown that $p_{\omega}$ is small
for words which are not trivial or proper powers in $\mathbf{F}_{k/2}$,
and the number of trivial and power words is bounded, giving a bound
on $\mathbb{E}(\rho\left(\mathcal{G}\right)^{\ell})$. An appropriate
choice of $\ell$ then implies $\rho\left(\mathcal{G}\right)\leq3k^{3/4}$
a.a.s.\ as $n\rightarrow\infty$. In \cite{Pud13,puder2012measure}
it is shown that $p_{\omega}$ depends on the so-called \emph{primitivity
rank} of $\omega$, and in \cite{puder2012expansion} this is made
qualitatively precise, and words of each primitivity rank are counted,
leading to $\rho\left(\mathcal{G}\right)\leq2\sqrt{k-1}+1$ a.a.s. 

Now, let $\mathcal{D}$ be a $k$-regular digraph on $n$ vertices,
so that $A_{\mathcal{D}}$ is simply the sum of $k$ independent $n\times n$
permutation matrices. We cannot use $\tr\left(A^{\ell}\right)$ directly
to bound $\rho\left(\mathcal{G}\right)$, since $A$ is not normal
anymore. Instead, denoting $A_{0}=A\big|_{L_{0}^{2}}$ we use Gelfand's
formula:
\[
\sqrt[2\ell]{\rho\left(A_{0}^{*\ell}A_{0}^{\ell}\right)}=\sqrt[\ell]{\left\Vert A_{0}^{\ell}\right\Vert }\:\underset{{\scriptscriptstyle \ell\rightarrow\infty}}{\text{{\scalebox{1.5}[1]{\ensuremath{\searrow}}}}}\:\rho\left(A_{0}\right),
\]
and to bound $\rho\left(A_{0}^{*\ell}A_{0}^{\ell}\right)$ we study
$\tr\left((A^{*\ell}A^{\ell})^{t}\right)$. The entries of $\left(A^{*\ell}A^{\ell}\right)^{t}$
correspond to ``$\ell$-alternating words'' of length $2\ell t$:
words in $S=\left\{ x_{1}^{\pm},\ldots,x_{k}^{\pm}\right\} $ which
are composed of alternating sequences of $\ell$ negative letters
followed by $\ell$ positive ones. Given a starting vertex, each such
word translates to a path in $\mathcal{D}$, where negative letters
indicate crossing a directed edge in the ``wrong'' direction. Again
$p_{\omega}$ is the probability that this path is closed, so that
\[
\rho\left(\left(A_{0}^{*\ell}A_{0}^{\ell}\right)^{t}\right)\leq\tr\left(\left(A^{*\ell}A^{\ell}\right)^{t}\right)-k^{2\ell t}=n\cdot\sum_{\omega\in\left(\smash{S_{+}^{\ell}\times S_{-}^{\ell}}\right)^{t}}p_{w}.
\]
Now, $\mathbb{E}\left(\tr\left(\left(A^{*\ell}A^{\ell}\right)^{t}\right)\right)$
can be bounded similarly to \cite{puder2012expansion}, this time
by counting $\ell$-alternating words of each primitivity rank, and
choosing both $\ell$ and $t$ carefully. We discovered that already
from $\ell=2$ one obtains the bound $\rho\left(\mathcal{D}\right)\leq\sqrt{2k}+\varepsilon$
a.a.s., and we expect that as $\ell$ goes to infinity one should
recover (\ref{eq:alon-conjecture-digraphs}) up to an additive constant.
As remarked above, this analysis goes through the spectral norm of
$A^{\ell}$, so it might lead to other interesting results on $\mathcal{D}$.

\section{\label{sec:Questions}Questions}

\setlist[enumerate]{leftmargin=*,widest=0}
\begin{enumerate}
\item A \emph{non-regular }graph $\mathcal{G}$ is said to be Ramanujan
if its nontrivial spectrum is contained in the $L^{2}$-spectrum of
its universal cover (which is a non-regular tree). This definition
is justified both by the extended Alon-Boppana theorem \cite{greenberg1995spectrum,grigorchuk1999asymptotic}
and by the behavior of random covers \cite{friedman2003relative,puder2012expansion,friedman2014relativized,brito2018spectral}.
What is the appropriate definition of a non-regular Ramanujan digraph?
\item Can standard results on expanders (such as the Cheeger inequalities
and the expander mixing lemma) be extended to almost-normal directed
expanders?
\item Does symmetrization turn a family of almost-normal directed expanders
into a family of expanders?
\item Are there infinite almost-normal families of non-periodic $k$-regular
Ramanujan digraphs for $k$ which is not a prime power?
\item Is there an almost-normal family of $k$-regular digraphs with $n\rightarrow\infty$
whose nontrivial spectrum is contained in the circle $\{z\,|\,\left|z\right|=\sqrt{k}\}$?
\item Almost-normality is an ``algebraic'' phenomenon: it originates from
representation theory in \cite{Lubetzky2017RandomWalks}, and from
the special structure of line-digraphs in \cite{lubetzky2016cutoff}.
There seems to be no reason that random models will have this property,
or that it will be stable under perturbations. Is there a more flexible
definition of almost-normality, which still gives a theorem in the
spirit of Proposition \ref{prop:bound-powers}?
\item How important is almost-normality? Is there a family of $k$-regular
Ramanujan digraphs which behave like amenable graphs in terms of expansion?
\item In the infinite case we can even ask whether a \emph{single }digraph
is almost-normal, meaning that its adjacency operator is unitarily
equivalent to a direct integral of linear operators of bounded dimension.
Such digraphs can be obtained by taking line-digraphs of infinite
graphs, or more generally $\mathcal{D}_{W}\left(\mathcal{B}\right)$
where $W$ is some walk on a building $\mathcal{B}$ (or an infinite
quotient of a building). To what extent does the spectral theory of
infinite symmetric graphs carries over to almost-normal digraphs?
\end{enumerate}
\bibliographystyle{amsalpha}
\bibliography{/home/ori/Dropbox/Math/mybib}

\begin{flushleft}
\noun{Einstein Institute of Mathematics, Hebrew University of Jerusalem,
Israel.}\texttt{}~\\
\texttt{parzan@math.huji.ac.il.}
\par\end{flushleft}
\end{document}